\theoremstyle{plain}
\newtheorem{theorem}{Theorem}
\newtheorem{lemma}[theorem]{Lemma}
\newtheorem{prop}[theorem]{Proposition}
\newtheorem{corollary}[theorem]{Corollary}
\renewcommand{\leq}{\leqslant}
\renewcommand{\geq}{\geqslant}
\renewcommand{\P}{\mathbb{P}}
\newcommand{\E}{\mathbb{E}}
\newcommand{\Z}{\mathbb{Z}}
\newcommand{\Top}{\operatorname{Top}}
\newcommand{\vphi}{\varphi}
\newcommand{\eps}{\varepsilon}
\newcommand{\cT}{\mathcal{T}}
\newcommand{\tvphi}{\tilde{\varphi}}
\newcommand{\tS}{\tilde{S}}
\newcommand{\one}{\mathds{1}}
\newcommand{\beq}{\begin{eqnarray*}}
\newcommand{\eeq}{\end{eqnarray*}}
\title{Sharpness of the phase transition for parking on random trees}
\author{Alice Contat\footnote{Universit\'e Paris-Saclay E-mail: \texttt{alice.contat@universite-paris-saclay.fr} }}
\date{ }
\begin{document}

\maketitle
\begin{abstract}
Recently, a phase transition phenomenon has been established for parking on random trees in \cite{CG19, GP19,jones19,LP16,P20}. We extend the results of \cite{CH19} on general Galton--Watson trees and allow different car arrival distributions depending on the vertex outdegrees. We then prove that this phase transition is \emph{sharp} by establishing a large deviations result for the flux of exiting cars. This has consequences on the offcritical geometry of clusters of parked spots which displays similarities with the classical Erd\H{o}s-Renyi random graph model.

\begin{figure}[h!]
\centering
\begin{tikzpicture}[ xscale = 0.7, yscale = 0.95, node distance=2cm]
\tikzstyle{every state}=[fill=black!10,draw=none, text=red]
\node[state] at (0,0)  (P1) {};
\node[state] at (-1,2) (P2)   {};
\node[state] at (1,2) (P3)  {};
\node[state] at (1,4) (P4) {};
\node[state] at (3,4) (P5)  {};
\node[state] at (5,4) (P6)  {};
\node[state] at (0,6) (P7)   {};
\node[state] at (2,6) (P8)   {};
\node[state] at (5,6) (P9)   {};
\node[state] at (-1,4) (P10)   {};
\node[state] at (-3,4) (P11)   {};
\draw[->]  (P1) -- (0,-2);
\draw (-2,2) node {\faCar};
\draw (-2.7,2) node {\faCar};
\draw (-3.4,2) node {\faCar};


\draw (0,6.75) node {\faCar};

\draw (3,4.75) node {\faCar};
\draw (3,5.15) node {\faCar};

\draw (5,6.75) node {\faCar};
\draw (5,7.15) node {\faCar};

\draw (6,4) node {\faCar};
\path[->, >=stealth, shorten >=1pt]
(P2) edge (P1)
(P3) edge (P1)
(P4) edge (P3)
(P5) edge (P3)
(P6) edge (P3)
(P7) edge (P4)
(P8) edge (P4)
(P9) edge (P6)
(P10) edge (P2)
(P11) edge (P2);

\end{tikzpicture}
\qquad
\begin{tikzpicture}[ xscale = 0.7, yscale = 0.95, node distance=1cm]

\tikzstyle{every state}=[fill=black!10,draw=none, text=blue]
\node[state] at (0,0)  (P1) {\faCar};
\node[state] at (-1,2) (P2)   {\faCar};
\node[state] at (1,2) (P3)  {\faCar};
\node[state] at (1,4) (P4) {};
\node[state] at (3,4) (P5)  {\faCar};
\node[state] at (5,4) (P6)  {\faCar};
\node[state] at (0,6) (P7)   {\faCar};
\node[state] at (2,6) (P8)   {};
\node[state] at (5,6) (P9)   {\faCar};
\node[state] at (-1,4) (P10)   {};
\node[state] at (-3,4) (P11)   {};
\draw[->]  (P1) -- (0,-2);
\path[->, >=stealth, shorten >=1pt]
(P2) edge [ultra thick] node[left] {2} (P1) 
(P3) edge [ultra thick] node[right] {1} (P1)
(P4) edge  (P3)
(P5) edge [ultra thick] node[left, near start] {1}(P3)
(P6) edge [ultra thick] node[below right] {1}(P3)
(P7) edge (P4)
(P8) edge (P4)
(P9) edge [ultra thick] node[left] {1}(P6)
(P10) edge (P2)
(P11) edge (P2)
;
\draw (0.5,-1.75) node {\faCar};
\draw (1.2,-1.75) node {\faCar};

\end{tikzpicture}
\caption{Illustration of the parking process of $9$ cars on a tree. On the left: a rooted tree together with a configuration of cars trying to park. On the right, the resulting parking configuration with flux on the edges and with two cars which did not manage to park on the tree.}\label{parking}
\end{figure}
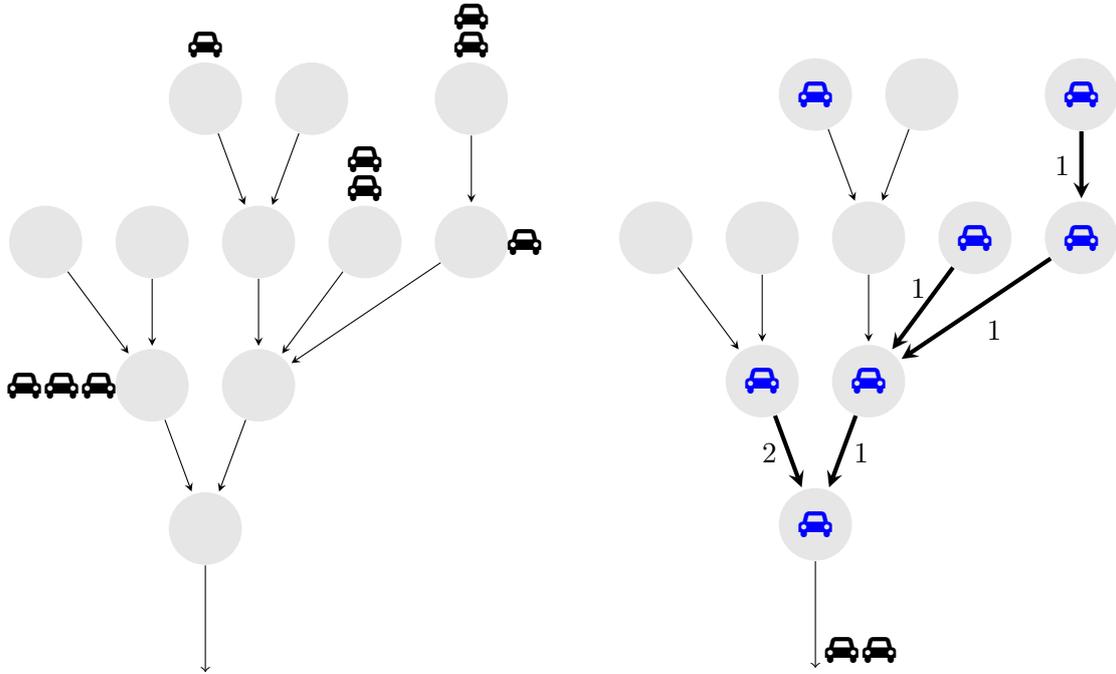

\end{abstract}

\newpage
 
\section{Introduction}

Parking functions on the line are combinatorial objects first introduced by Konheim and Weiss in \cite{KW66} in the context of collision in hashing functions.  Since then, many generalizations of the parking procedure have been studied, most notably on plane trees. On critical Galton--Watson trees with i.i.d.~car arrivals on the vertices, Curien and Hénard proved in \cite{CH19} that  the parking procedure undergoes a phase transition: when the ``density” of cars is small, then the probability that all cars can park is large, whereas when the density is too large, then with high probability, there is at least one car that will not manage to park. This transition was first observed by Lackner and Panholzer  \cite{LP16}, then by Goldschmidt and Przykucki  \cite{GP19} on Cayley trees with cars arriving uniformly on the vertices. Other particular cases have been studied in \cite{CG19,jones19,P20}. The phase transition was also proved in related models: see \cite{BBJ19, GP19} for the case of supercritical Galton--Watson trees and \cite{CD20} for a similar framework on regular trees.

In this work, we generalize the results of \cite{CH19} by allowing the distributions of car arrivals to depend on the vertex's outdegree. But most importantly we show that this phase transition is \emph{sharp}. Establishing sharpness of phase transition in statistical mecanics models is a crucial step in the understanding of the transition and in particular offcritical regimes, see \cite{AB87,DC19}. In our case, this sharpness will appear as large deviations for the flux of cars in the subcritical and supercritical cases (see Theorem \ref{GrandDev}) and will have direct consequences on the geometry of clusters in these regimes (Corollary \ref{CorGeom}).
\paragraph{General Phase Transition.}
Let us first recall the parking procedure on a rooted tree $\mathbf{t}$ i.e.~a tree with a distinguished vertex called \emph{the root} and denoted by $\diameter$. We assume that the edges of $\mathbf{t}$ are oriented towards the root and consider the vertices of $\mathbf{t}$ as parking spots. Imagine now that cars arrive one after the other on the vertices of $\mathbf{t}$. Each car tries to park at its arriving parking spot. If the parking spot is empty, the car stops there. If not, the driver follows the edges towards the root and takes the first available space, if there is one. If not, the car leaves without parking (see Figure \ref{parking}). An important property of this model is its Abelian property: changing the order of the car arrivals does not affect the final configuration and the number of cars that exit the tree.

We consider here a slightly more general model than in \cite{CH19} by allowing the law of car arrivals to depend on the outdegree of the vertex. Specifically, given a rooted tree $\mathbf{t}$, we suppose that the arrivals of the cars on each vertex of $\mathbf{t}$ are independent random variables and that their law only depends on the outdegree of the vertex i.e.~its number of children. In what follows, we simply write degree instead of outdegree (note the special role of the root vertex).  We denote by $(L_x : x \in \mathbf{t})$ the car arrivals on the vertices of $\mathbf{t}$. The common law of the arrival of cars on a vertex of degree $k$ is 
\begin{equation}
\mu_{(k)} \text{ with mean } m_{(k)} \geq 0 \text{ and finite variance }\sigma_{(k)}^{2}.
\end{equation}
In this paper we shall only deal with (rooted) \emph{plane} trees (i.e.~such that the children of a given vertex are ranked from left to right), which are versions of critical Galton--Watson tree with offspring distribution
\begin{equation}
\nu  = \sum_{k = 0}^{\infty} \nu_k \delta_k \text{ with mean } 1 \text{ and finite variance }\Sigma^2,
\end{equation}
the classical Galton--Watson tree $\cT$, the Galton--Watson tree conditioned to have $n$ vertices $\cT_n$\footnote{In all this paper, we shall implicitly restrict to the values of $n$ for which $\P (|\cT| =n) >0$.} and the Galton--Watson tree conditioned to survive forever $\cT_{\infty}$.
We assume throughout the paper that the number of cars arriving on a ``typical" vertex has exponential tail, i.e.~
\begin{equation}\label{Hexp}\tag{$H_{\exp}$} 
F(z) = \sum_{k \geq 0} \nu_{k} \sum_{i \geq 0} \mu_{(k)} (i) z^i 
\end{equation} 
has a radius of convergence strictly larger than $1$. We also suppose that $ \nu \neq \delta_1$ and that there exists $k \geq 1$ such that $ \nu_k >0 $ and $ \mu_{(k)} \neq \delta_1$.
Let $\vphi (\mathbf{t})$ be the flux of the parking process on $\mathbf{t}$ i.e.~the number of exiting cars. Given a vertex $x$ of $\mathbf{t}$, we sometimes denote by $ \vphi_x (\mathbf{t})$ the flux at vertex $x$ of $\mathbf{t}$, i.e. the outgoing flux of the parking process on $\Top(\mathbf{t}, x)$ the subtree of the descendants of $x$ in $\mathbf{t}$. 
To characterize the location of the phase transition, we introduce the size-biased distribution $\overline{ \nu} (k) = k \nu_{ k}$ for $k \geq1$ and the quantities 
\[ \E_{\overline{\nu}} [m] := \sum_{k = 0}^{\infty} k \nu_k \, m_{(k)},  \quad \E_{\nu} [m] := \sum_{k = 0}^{\infty} \nu_k \, m_{(k)} \quad\text{ and } \quad \E_{\nu} [\sigma^2 + m^2 -m] := \sum_{k = 0}^{\infty}\nu_k  \left( \sigma_{(k)}^2 + m_{(k)}^2 -m_{(k)}\right).\] 

\begin{theorem}[Phase transition for parking]\label{DegDep}
We assume $\E_{\overline{\nu}} [m] \leq 1$. The parking process on Galton--Watson tree undergoes a phase transition which depends on the sign of the quantity 
\begin{equation}\label{tTheta}
\Theta := (1- \E_{\overline{\nu}} [m])^2 -\Sigma^2 \E_{\nu} [\sigma^2 + m^2 -m].
\end{equation}
More precisely, we have three regimes classified as follows: 
 \begin{center}\label{tab}
\begin{tabular}{|c||c|c|c|}
  \hline
 & subcritical  & critical  & supercritical  \\
 & $\Theta >0$ & $\Theta =0$ &$\Theta <0$ \\
  \hline \hline
  $\vphi(\cT_n)$ as $n \to \infty$ & converges in law & $\underset{n \to \infty}{\xrightarrow{(\P)}} \infty$ but is $o(n)$ & $\sim cn$ with $c>0$ \\
  \hline \hline
$\Sigma^2 \E[\vphi(\cT)] + \E_{\overline{\nu}} [m]-1$ &$ - \sqrt{\Theta}$ & $0$ & $\infty$ \\ 
  \hline \hline
$\P \left( \diameter \text{ is parked in } \cT \right)$ &$\E_{\nu} [m]$&$\E_{\nu} [m]$&$\E_{\nu} [m]-c$ \\
\hline
\end{tabular}
 \end{center}
\end{theorem}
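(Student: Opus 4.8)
The plan is to exploit the recursive structure of the flux. By the Abelian property, if $x$ is a vertex of a tree $\mathbf{t}$ with children $x_{1},\dots,x_{k}$, then exactly one car parks at $x$ as soon as a car visits it, so
\[
\vphi_{x}(\mathbf{t}) = \Big( L_{x} + \sum_{i=1}^{k} \vphi_{x_{i}}(\mathbf{t}) - 1 \Big)^{+},
\]
and $x$ is occupied in the final configuration if and only if $A_{x} := L_{x} + \sum_{i}\vphi_{x_{i}}(\mathbf{t}) \geq 1$. Set $\Phi(s) = \E[s^{\vphi(\cT)}]$, let $w = \P(\diameter\text{ unoccupied in }\cT)$, and $u(s) = \E[s^{\vphi(\cT)}\,\one_{\diameter\ \mathrm{occupied}}]$, so that $\Phi = u + w$ and $w$ does not depend on $s$ (an unoccupied root forces $\vphi(\cT)=0$). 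Combining the branching property of $\cT$ with the identity $s^{(n-1)^{+}} = \one_{n=0} + s^{-1}(s^{n}-\one_{n=0})$ yields the fixed-point equation
\[
s\,u(s) + w = \sum_{k\geq 0}\nu_{k}\,\widehat{\mu}_{(k)}(s)\,(u(s)+w)^{k}, \qquad s\in[0,1],\quad \widehat{\mu}_{(k)}(s) := \sum_{i\geq 0}\mu_{(k)}(i)\,s^{i},
\]
where $w$ is recovered by letting $s\to 0$; hypothesis $(H_{\exp})$ ensures all the series in play converge on a neighbourhood of $1$.

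I would then expand this equation around $s=1$, where $u(1)+w=1$ because $\cT$ is a.s.\ finite. Matching first-order terms gives $u(1) = \E_{\nu}[m]$ as soon as $\E[\vphi(\cT)] = u'(1^{-}) < \infty$, i.e.\ $\P(\diameter\text{ parked in }\cT)=\E_{\nu}[m]$, which is the last line of the table outside the supercritical regime. Matching second-order terms and cancelling the common $u''(1)$ contribution leaves the scalar relation
\[
\Sigma^{2}\,\E[\vphi(\cT)]^{2} - 2\big(1-\E_{\overline{\nu}}[m]\big)\,\E[\vphi(\cT)] + \E_{\nu}[\sigma^{2}+m^{2}-m] = 0, \quad\text{equivalently}\quad \big(\Sigma^{2}\E[\vphi(\cT)]+\E_{\overline{\nu}}[m]-1\big)^{2} = \Theta .
\]
Thus, once the right root is identified, $\Sigma^{2}\E[\vphi(\cT)]+\E_{\overline{\nu}}[m]-1$ equals $-\sqrt{\Theta}$ when $\Theta>0$ and $0$ when $\Theta=0$, while for $\Theta<0$ the relation has no nonnegative solution so $\E[\vphi(\cT)]=\infty$ — this is the middle line of the table. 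To pin down the root I would realise $\Phi$ as the minimal solution of the fixed-point equation, obtained as the increasing limit of its iterates started from $0$ (which mirrors the construction of $\vphi(\cT)$ by exploring $\cT$ from its leaves), and couple the models monotonically in the law of the car arrivals: pathwise $\vphi(\cT)$ is non-decreasing in the $L_{x}$'s and vanishes when no car ever arrives, a degenerate configuration lying on the $-\sqrt{\Theta}$ branch ($\Theta=1$, $\E[\vphi(\cT)]=0$); increasing the intensity then keeps $\E[\vphi(\cT)]$ on that branch up to criticality, with the critical value reached by monotone convergence, and past criticality sends it to $+\infty$. Carrying this out rigorously, together with the regularity needed to differentiate at the degenerate point $s=1$ (where the criticality of $\cT$ makes the implicit function theorem fail), is the main analytic obstacle.

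Finally, for the asymptotics of $\vphi(\cT_{n})$ I would pass to the local limit $\cT_{n}\to\cT_{\infty}$, Kesten's tree. Along its spine $v_{0}=\diameter,v_{1},\dots$ the fluxes satisfy $\vphi_{v_{j}}(\cT_{\infty}) = (\vphi_{v_{j+1}}(\cT_{\infty})+L_{v_{j}}+\Xi_{j}-1)^{+}$, where $\Xi_{j}$ sums the fluxes of the independent copies of $\cT$ dangling off $v_{j}$; this is the Markov chain $X\mapsto(X+L+\Xi-1)^{+}$ whose increment has mean $\E_{\overline{\nu}}[m]+\Sigma^{2}\E[\vphi(\cT)]$, which equals $1-\sqrt{\Theta}<1$ if $\Theta>0$, equals $1$ if $\Theta=0$, and is infinite if $\Theta<0$. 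This chain is therefore positive recurrent, null recurrent, or transient, and — after checking that $\vphi(\cT_{n})$ converges in law to the root flux of $\cT_{\infty}$, a non-trivial continuity statement since $\vphi$ is not a local functional — one reads off convergence in law, convergence to $\infty$ in probability, and unbounded flux respectively. The sharper rates I would get from the conservation identity $\vphi(\cT_{n}) = \sum_{x\in\cT_{n}}L_{x} - \#\{x\in\cT_{n}:x\ \mathrm{parked}\}$: a uniformly chosen vertex of $\cT_{n}$ has outdegree asymptotically $\nu$-distributed and descendant subtree $\Top(\cT_{n},\cdot)$ converging to an unconditioned $\cT$, and "$x$ is parked" depends only on that subtree and on $L_{x}$, with limiting probability $\P(A_{\diameter}\geq 1)=\P(\diameter\text{ parked in }\cT)$; dividing by $n$ gives $n^{-1}\E[\vphi(\cT_{n})]\to\E_{\nu}[m]-\P(\diameter\text{ parked in }\cT)$. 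In the subcritical and critical regimes this limit is $0$ (consistent with the third line of the table), whence $\vphi(\cT_{n})=o(n)$ by Markov's inequality; in the supercritical regime it equals some $c>0$, and a concentration estimate (a martingale/bounded-differences bound along an exploration of $\cT_{n}$, using the exponential tail of the car arrivals) upgrades this to $\vphi(\cT_{n})\sim cn$ in probability and identifies $\P(\diameter\text{ parked in }\cT)=\E_{\nu}[m]-c$. The two genuinely delicate points here are transferring $\vphi$ across the local limit and the concentration in the linear regime.
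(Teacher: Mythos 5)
Your route is genuinely different from the paper's. You work directly with the generating-function fixed-point equation $s\,u(s)+w = \sum_k \nu_k \widehat\mu_{(k)}(s)\,(u(s)+w)^k$ and expand it at the singular point $s=1$, reading off $\P(\diameter\text{ parked})=\E_\nu[m]$ at first order and the quadratic $\Sigma^2\E[\vphi(\cT)]^2-2(1-\E_{\overline\nu}[m])\E[\vphi(\cT)]+\E_\nu[\sigma^2+m^2-m]=0$ at second order. The paper instead introduces an arrival-time parameter $t$, writes the mean flux via the many-to-one formula as an integral of random-walk hitting-time expectations along the spine of Kesten's tree, and solves the resulting ODE (Proposition~\ref{meanflux}); the third line of the table comes from a separate hitting-probability computation (Proposition~\ref{probaroot}); the first line combines the law of large numbers (Proposition~\ref{LGN}, which is exactly your conservation-of-cars plus fringe-subtree argument) with the spine-walk analysis adapted from \cite{CH19}. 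The algebra in your expansion is correct and does land on the same $\Theta$, and your fixed-point equation is precisely the paper's equation \eqref{EQ}, which the paper reserves for the subcritical large-deviations proof. What you gain is a shorter computation to the quadratic; what you lose is that the ODE formulation handles root selection and regularity for free (the initial condition $\Phi(0)=0$ fixes the branch, and the ODE is derived without any a priori moment hypothesis on $\vphi(\cT)$), whereas you have to justify cancelling the possibly infinite $u''(1^-)$ from both sides and then supply the continuity-in-intensity argument that identifies the $-\sqrt\Theta$ root. Both of these you rightly flag as the analytic core of the argument.

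One gap you underplay concerns the supercritical column. Your quadratic only shows $\E[\vphi(\cT)]=\infty$ when $\Theta<0$; it does \emph{not} show $c>0$, i.e.\ the strict inequality $\P(\diameter\text{ parked in }\cT)<\E_\nu[m]$. Your first-order matching, which would give equality, is valid only under $\E[\vphi(\cT)]<\infty$, so when $\Theta<0$ it yields no information either way, and $\E[\vphi(\cT)]=\infty$ is compatible on its face with $\vphi(\cT_n)/n\to 0$. The paper closes this in Proposition~\ref{probaroot} by showing that the spine walk has $\P_0(T^{(s)}_{-1}<\infty)<1$ once $s>t_{\max}$, i.e.\ a positive fraction of cars starting on the spine never find a spot. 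In your framework you would need the analogous quantitative statement: not merely that the spine Markov chain $X\mapsto(X+L+\Xi-1)^+$ is transient, but that a positive density of cars on the spine escape to infinity, which is what delivers $c>0$.
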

\bigskip

As an example of application of Theorem \ref{DegDep}, if the cars can arrive only on the leaves with law $\mu_{(0)}$, then the phase transition occurs for $\Theta_{\text{leaf}} = 1 - \Sigma^2 \nu_0 ( \sigma_{(0)}^2 + m_{(0)}^2 -m_{(0)})$, where $m_{(0)}$ and $\sigma_{(0)}^2$ are respectively the expectation and the variance of the number of arrivals at a leaf. However, if the cars arrive with the same density but on every vertex i.e.~if the distribution of the car arrivals is $\mu = \nu_0 \mu_{(0)} + (1 - \nu_{0} ) \delta_0$ and does not depend on the degree of the vertex, then $\Theta_{\text{unif}} = (1-\nu_0 m_{(0)})^2  - \Sigma^2 \nu_0 ( \sigma_{(0)}^2 + m_{(0)}^2 -m_{(0)}) \leq \Theta_{\text{leaf}}$. This means that with the same density of cars, the parking can be subcritical if the cars arrive only on the leaves but supercritical if the cars arrive uniformly on every vertex. \\
A natural assumption for the parking process to be subcritical is that $\E_{\nu} [m] \leq 1$, so that there are typically fewer cars than parking spots. The assumption $\E_{\overline{\nu}} [m] \leq 1$ may sound unnatural but comes for the fact that the number of children of the vertices in a ``typical" branch have size-biased law $\overline{\nu}$ \cite{GP19}. Indeed, the parking process on Kesten's tree $\cT_{\infty}$ is supercritical when $\E_{\overline{\nu}} [m] \geq 1$, as we will see in Section \ref{MF}. As a consequence of this phase characterization, we can deduce that if $\E_{\overline{\nu}} [m] \leq 1$ and $\Theta >0$, the parking process is subcritical and therefore $\E_{\nu} [m] \leq 1$, and conversly, if $\E_{\overline{\nu}} [m] \leq 1$ and $\E_{\nu} [m] > 1$, then the parking process is supercritical and hence $\Theta <0$. However these implications are not derived by easy algebraic manipulations.\\
As we said above, Theorem \ref{DegDep} generalizes the result of \cite{CH19} and its proof follows the same lines. It is presented in Section \ref{PT}.

\paragraph{Sharpness and large deviations.} Our main contribution in this paper consists in showing that the phase transition established in Theorem \ref{DegDep} is sharp. More precisely, we shall reinforce the first line in the table of Theorem \ref{DegDep} by proving large deviations: 
 \begin{theorem}[Large deviations for the flux]\label{GrandDev}
 Let $\eps >0$. In the supercritical regime i.e.~if $\Theta <0$, there exists $\delta >0$, and $n_0 \geq 0$ such that for all $n \geq n_0$, 
\[ \P\left( \left| \vphi (\cT_n) - cn\right| \geq \eps n \right) \leq e^{-\delta n},\]
where $c>0$ is as in Theorem \ref{DegDep}.
In the subcritical regime, there exists $\delta >0$, and $n_0 \geq 0$ such that for all $n \geq n_0$, 
\[ \P\left( \left| \vphi (\cT) \right| \geq \eps n \right) \leq e^{-\delta n}.\]
\end{theorem}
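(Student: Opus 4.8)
Both bounds are governed by the fixed--point equation for the generating function of the flux, which is also what produces the trichotomy of Theorem~\ref{DegDep}. Write $\hat\mu_{(k)}(s)=\sum_{i\ge0}\mu_{(k)}(i)s^{i}$ and, for a power series $u$, set $\mathcal B(u,s)=\sum_{k\ge0}\nu_k\hat\mu_{(k)}(s)\,u^{k}$, which by $(H_{\exp})$ converges on a neighbourhood of $\{|u|\le1\}\times\{|s|\le1\}$. The flux satisfies $\vphi(\cT)\overset{d}{=}\bigl(L+\sum_{i=1}^{K}\vphi^{(i)}-1\bigr)^{+}$ with $K\sim\nu$, $L\sim\mu_{(K)}$ and $\vphi^{(i)}$ i.i.d.\ copies of $\vphi(\cT)$; applying $\E[s^{(Y-1)^{+}}]=\P(Y=0)(1-1/s)+\tfrac1s\E[s^{Y}]$ shows that $f(s):=\E[s^{\vphi(\cT)}]$ solves $H(f(s),s)=0$ on $[0,1]$, where
\[
H(u,s):=\mathcal B(u,s)-su+(s-1)\beta_0,\qquad \beta_0:=\P(\text{no car reaches }\diameter\text{ in }\cT)=1-\P(\diameter\text{ parked in }\cT).
\]

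\textbf{Subcritical case ($\Theta>0$).} As $\cT$ is a.s.\ finite, $f$ is a power series with nonnegative coefficients summing to $f(1)=1$, so its radius of convergence is $\ge1$; it suffices to show it is $>1$, for then $\E[e^{\theta\vphi(\cT)}]=f(e^{\theta})<\infty$ for small $\theta>0$ and $\P(\vphi(\cT)\ge\eps n)\le e^{-\theta\eps n}f(e^{\theta})$ gives the claim with $\delta=\tfrac12\theta\eps$. By Pringsheim's theorem it is enough to show $f$ is analytic at $s=1$. Using $\sum_k k\nu_k=1$ and $\beta_0=1-\E_\nu[m]$ (the subcritical line of Theorem~\ref{DegDep}), one computes $H(1,1)=\partial_uH(1,1)=\partial_sH(1,1)=0$, so $H$ vanishes to order two at $(1,1)$ with
\[
\partial^2_{uu}H(1,1)=\Sigma^2,\qquad \partial^2_{us}H(1,1)=\E_{\overline\nu}[m]-1,\qquad \partial^2_{ss}H(1,1)=\E_\nu[\sigma^2+m^2-m],
\]
hence $\det\mathrm{Hess}\,H(1,1)=\Sigma^2\,\E_\nu[\sigma^2+m^2-m]-(1-\E_{\overline\nu}[m])^2=-\Theta<0$ (and $\Sigma^2>0$ since $\nu\ne\delta_1$). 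Thus $(1,1)$ is an ordinary double point of the analytic curve $\{H=0\}$: locally the solution set is the transverse union of two analytic branches, neither of which is constant. Since $f$ is continuous on $[0,1]$ with $f(1)=1$ and $H(f(s),s)=0$, it coincides on a left neighbourhood of $1$ with one of these branches, hence is analytic at $s=1$, which is what was needed (if $\vphi(\cT)\equiv0$ the statement is trivial).

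\textbf{Supercritical case ($\Theta<0$).} Now the supercritical line of Theorem~\ref{DegDep} gives $\beta_0=1-(\E_\nu[m]-c)$, whence $\partial_sH(1,1)=c>0$: the local picture at $(1,1)$ is a square--root branch, not a node. The plan is a singularity analysis, uniform in $s$ near $1$, of the bivariate series
\[
F(x,s):=\E\bigl[x^{|\cT|}s^{\vphi(\cT)}\bigr]=\sum_{n}\P(|\cT|=n)\,\E\bigl[s^{\vphi(\cT_n)}\bigr]x^{n},
\]
which, by the same computation applied to the recursive decomposition of $\cT$, solves $sF(x,s)=x\,\mathcal B(F(x,s),s)+x(s-1)\,\mathcal B(F(x,0),0)$. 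At $s=1$ this reduces to $F(x,1)=xG_\nu(F(x,1))$, so $F(\cdot,1)$ has the usual square--root singularity at $x=1$ and $\P(|\cT|=n)\sim\kappa n^{-3/2}$. Treating $F(x,0)=\E[x^{|\cT|}\one_{\{\vphi(\cT)=0\}}]$ as an analytic input (discussed below), the displayed equation is a smooth implicit--function schema; one aims to show that its dominant singularity $\rho(s)$ is analytic near $s=1$ with $\rho(1)=1$, that $F(\cdot,s)$ again has a square--root singularity at $\rho(s)$, and hence, by a transfer theorem uniform in $s$, that $\E[s^{\vphi(\cT_n)}]=[x^n]F(x,s)/[x^n]F(x,1)=\rho(s)^{-n}(1+o(1))$. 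Then $\Lambda(\theta):=\lim_n\tfrac1n\log\E[e^{\theta\vphi(\cT_n)}]=-\log\rho(e^{\theta})$ is finite, analytic and strictly convex near $\theta=0$, with $\Lambda(0)=0$ and $\Lambda'(0)=c$ (consistent with, and re-deriving, $\vphi(\cT_n)\sim cn$). Chernoff's bound gives $\P(\vphi(\cT_n)\ge an)\le e^{-nI(a)}$ for $a>c$ and $\P(\vphi(\cT_n)\le an)\le e^{-nI(a)}$ for $a<c$, with $I(a)=\sup_\theta(\theta a-\Lambda(\theta))>0$; taking $\delta=\min(I(c+\eps),I(c-\eps))$ finishes.

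\textbf{Where the difficulty lies.} The subcritical half is essentially the local computation above. In the supercritical half there are two genuine issues. First, $F(x,0)$ must be shown to have radius of convergence $>1$ so that it does not interfere with the dominant singularity of $F(\cdot,s)$; this is itself a subcritical--type estimate, for the sizes of the fully--parked fringe subtrees, and should follow by the method of the subcritical case. Second, and this is the heart of the matter, the singularity analysis must be run \emph{uniformly} over a complex neighbourhood of $s=1$, where $(x,s)=(1,1)$ is itself the critical point of $F(\cdot,1)$: the analyticity of $\rho$ there is not a routine implicit--function statement but rests precisely on the non-degeneracy $\partial_sH(1,1)=c\ne0$, i.e.\ on $\Theta<0$. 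This analyticity of $\rho$ --- equivalently the smoothness of $\Lambda$ at $0$ --- is the quantitative expression of sharpness, and establishing it, together with the aperiodicity/irreducibility needed for the transfer theorem, is the main obstacle.
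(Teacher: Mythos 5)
Your subcritical argument is essentially the paper's own route: the paper works with the generating function $W(z)=\E[z^{X}]$ of the number $X$ of cars visiting the root (so $\vphi(\cT)=(X-1)_+$), writes the same fixed-point functional equation (EQ), observes $\partial_x F=\partial_y F=0$ at the critical point, and resolves the singularity by carrying out the Newton--Puiseux iteration explicitly to exhibit two analytic branches with leading slopes $c_{\pm}=\bigl(-(\E_{\overline\nu}[m](1-\Sigma^2)-1)\pm\sqrt\Theta\bigr)/\Sigma^2$, then invokes Pringsheim. You work instead with $f(s)=\E[s^{\vphi(\cT)}]$ and justify the two analytic branches by the nondegenerate indefinite Hessian ($\det\mathrm{Hess}\,H(1,1)=-\Theta<0$, with $\partial^2_{uu}H=\Sigma^2>0$), which is a cleaner, equivalent packaging of the same node computation; your second-derivative values match the paper's $a_{i,j}$ coefficients, and $\beta_0=1-\E_\nu[m]$ is Proposition~\ref{probaroot}, so this half is correct and, up to cosmetics, the same proof.

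The supercritical half is where the proposal does not go through. You propose a bivariate singularity analysis of $F(x,s)=\E[x^{|\cT|}s^{\vphi(\cT)}]$, and you are candid that the essential steps --- analyticity of the dominant singularity $\rho(s)$ near $s=1$, uniform transfer, and the radius of convergence of $F(\cdot,0)$ --- are \emph{not} established but flagged as ``the heart of the matter.'' As written this is a research program, not a proof; in particular, the point $(x,s)=(1,1)$ is simultaneously the algebraic singularity of $F(\cdot,1)$ and the point where the functional equation degenerates, and making the implicit-function/transfer machinery uniform there is nontrivial (it is not clear that $\rho$ is even defined for $s>1$ without first knowing an a priori bound on $\vphi(\cT_n)$). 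The paper avoids this entirely with a purely probabilistic argument: it writes, by conservation of cars, $\vphi(\cT_n)=\sum_{x\in\cT_n}L_x-\sum_{x\in\cT_n}\one_{\{x\text{ parked}\}}$, gets exponential concentration of $\sum L_x$ directly from $(H_{\exp})$ and the polynomial asymptotics of $\P(T=n)$, and then proves a separate large-deviation estimate for the empirical fringe-subtree distribution of $\cT_n$ (Proposition~\ref{thmloc}) by encoding fringe patterns as a function of a truncated-increment shift of the \L{}ukasiewicz walk, which is a finite-state Markov chain, and applying Sanov's theorem; summing over the finitely many fringe shapes of size $\le M$ then controls $\sum\one_{\{x\text{ parked}\}}$. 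That route is elementary and complete, and you would need to either adopt it or genuinely carry out the uniform singularity analysis, including a proof that $F(\cdot,0)$ has radius of convergence strictly greater than $1$ (this is a nontrivial claim about the generating function of fully parked trees, and is not a corollary of the subcritical computation since here $\Theta<0$).
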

Notice that the second item of Theorem \ref{GrandDev} applies to the unconditioned tree $\cT$. It holds also for $\cT_n$ after changing the constants since $\P (|\cT| = n)$ has polynomial probability. Our proof of Theorem \ref{GrandDev} is very different in the supercritical and subcritical cases. In the supercritical case, the large deviations will be established by showing first large deviations for the fringe subtree distribution of $\cT_n$ in Section \ref{SecFringe}. This may be a result of independent interest which complements the law of large numbers and the Central Limit Theorem of Aldous \cite{A91} and Janson \cite{J16, J12}. In the subcritical case, we adopt a very different analytic point of view. Following \cite{GP19} the flux at the root of $\cT$ satisfies a recursive distributional equation which turns into an analytic equation on its generating function $z \mapsto W(z)$. However, the equation has a singularity at $z=1$ and $W(1)=1$. By employing Newton--Puiseux expansion we are able to resolve this singularity and prove that in the subcritical case $z \mapsto W(z)$ has radius of convergence strictly larger than $1$. This is the object of Section \ref{SecLD}.

\paragraph{Offcritical geometry.} We will give an application of this exponential decay for the flux to the size of the connected components after the parking procedure, that is the clusters of occupied parking spots in $\cT_n$ in the subcritical and supercritical phases. We notice that this geometry shares many similarities with the size of the connected components of the Erd\H{o}s-Renyi random graph: only logarithmic clusters in the subcritical case and a giant component in the supercritical phase. This is actually not a mere coincidence and in forthcoming works we shall exhibit a strong link between parking on random trees and random graph processes \cite{C23, CC21}.

\begin{corollary}[Offcritical geometry]\label{CorGeom}Let $|C_{\max}(n)|$ be the size of the largest parked connected component in $\cT_n$, and $|C_2(n)|$ be the size second largest connected component. Then, 
\[
\begin{array}{ll} 
\mbox{\text{ (supercritical $\Theta <0$) }} \qquad & \dfrac{|C_{\max}(n)|}{n} \underset{n \to \infty}{\xrightarrow{(\P)}} C \quad \mbox{\text{ and}} \quad  \P (|C_2(n)|\geq A \ln (n) ) \underset{n \to \infty}{\xrightarrow{}} 0,  \\
& \\
\mbox{\text{ (subcritical $\Theta >0$) }} \qquad  &\P ( |C_{\max}(n)| \geq A \ln (n)) \underset{n \to \infty}{\xrightarrow{}} 0, 
 \end{array}
\]
where $C\in (0,1)$ and $A>0$ are constants that depend on the laws $ \nu $ and $ \mu_{(k)}$ for $k \geq 0$.
\end{corollary}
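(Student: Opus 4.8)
The plan is to deduce Corollary \ref{CorGeom} from the large deviations estimate of Theorem \ref{GrandDev} together with the structure of the parked clusters. First I would record the basic description of a parked cluster: the set of occupied spots forms a union of subtrees, and the ``top'' of a parked cluster rooted at a vertex $x$ (meaning $x$ is parked but the flux out of $x$ exits because the parent of $x$ is either occupied-by-another-cluster or the cluster continues upward) is governed by the flux process $\vphi_x(\mathbf t)$. More precisely, a parked connected component of $\cT_n$ is a maximal subtree $S$ such that every vertex of $S$ is parked and no edge strictly inside $S$ carries flux that leaves $S$; equivalently, a cluster is determined by its highest vertex $x$, and the key point is that a cluster containing $x$ of size $\geq s$ forces, for some descendant structure, a large value of the flux entering the region — in the subcritical case this is exponentially unlikely by the second item of Theorem \ref{GrandDev}.

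In the subcritical case ($\Theta>0$) I would argue as follows. Fix $A>0$ large. For each vertex $x\in\cT_n$, let $E_x$ be the event that the parked cluster containing $x$ (if $x$ is parked) has size at least $A\ln n$. Inside the fringe subtree $\Top(\cT_n,x)$ the parking process is an autonomous sub-process (by the Abelian property and the fact that cars only move rootward), so a cluster of size $\geq s$ inside $\Top(\cT_n,x)$ requires that the parking restricted to a subtree of size $\geq s$ produces a saturated block; using a spinal/fringe decomposition one relates $\Top(\cT_n,x)$ to an unconditioned GW tree $\cT$ up to a polynomial Radon--Nikodym cost (as in Janson's fringe-tree results, already invoked in Section \ref{SecFringe}), and then invokes the subcritical bound $\P(|\vphi(\cT)|\geq \eps s)\leq e^{-\delta s}$ — here one takes $\eps$ small and notes that a parked block of size $s$ built from a positive density of cars per spot must, with the exceptional parameters excluded by the standing assumption $\mu_{(k)}\neq\delta_1$, force a flux of order $s$ somewhere along its spine. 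Choosing $A$ so that $\delta A\ln n$ beats the $\log$-polynomial fringe correction and then a union bound over the $n$ vertices $x$ gives $\P(|C_{\max}(n)|\geq A\ln n)\le n\cdot n^{O(1)}e^{-\delta A \ln n}\to 0$ for $A$ large enough.

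In the supercritical case ($\Theta<0$) I would split into the two assertions. For $|C_{\max}(n)|/n\xrightarrow{(\P)}C$: by Theorem \ref{DegDep} a positive density $c>0$ of cars exit the root, and more relevantly a positive density of vertices are parked; the exiting-flux cars at the root come from an essentially unique ``giant'' saturated region, and one identifies $C$ as the asymptotic density of parked vertices belonging to this region. Concretely, using the fringe-subtree large deviations of Section \ref{SecFringe} one shows that the number of vertices $x$ with $\vphi_x(\cT_n)$ of atypically small size (i.e.\ lying in a small cluster) concentrates, while the complement — vertices whose local flux is ``macroscopic'' — all belong to one component; subadditivity/monotonicity of cluster sizes under adding a car then pins down the limit constant $C\in(0,1)$ (it is $<1$ because $\E_\nu[m]<1$ forces a density of empty spots, cf.\ the third line of the table). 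For the bound $\P(|C_2(n)|\geq A\ln n)\to 0$: having extracted the giant component, the remaining parked vertices live in subtrees hanging off it on which the induced parking process is \emph{subcritical} in an appropriate conditioned sense (this is the standard ``the complement of the giant is subcritical'' phenomenon, here made rigorous via the $\Theta$-criterion applied to the conditioned fringe law), so the first-case argument applies verbatim to bound the second-largest cluster by $O(\ln n)$.

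The main obstacle I anticipate is the supercritical half: proving \emph{uniqueness} of the giant parked cluster and identifying the limit constant $C$. Unlike Erd\H{o}s--Rényi, there is no sprinkling argument available; instead one must combine the fringe large-deviation principle (Theorem/Section \ref{SecFringe}) with a delicate exploration of which vertices feed their flux into a common saturated block, and show that two macroscopic blocks cannot coexist because the region between them would itself have to be saturated. Making ``the complement of the giant is subcritical'' precise — i.e.\ showing the induced parking problem on the leftover subtrees satisfies $\Theta_{\mathrm{induced}}>0$ — is the second delicate point and is where the exact form of $\Theta$ in \eqref{tTheta}, evaluated against the size-biased-and-conditioned offspring law along the giant, does the work.
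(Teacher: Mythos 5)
Your subcritical argument contains a genuine gap, and it is precisely where you dismiss the method the paper actually uses. You claim that ``a parked block of size $s$ \dots must \dots force a flux of order $s$ somewhere along its spine.'' This is false: a subtree in which every vertex receives exactly one car is fully parked with zero flux on every edge, and nothing in the standing assumptions rules out large parked blocks carrying $O(1)$ flux. You then write that ``unlike Erd\H{o}s--R\'enyi, there is no sprinkling argument available,'' but sprinkling is exactly what the paper does. Since $\Theta>0$ is an open condition, one can enlarge each $\mu_{(k)}$ to $\tilde\mu_{(k)}^{\eps}(j)=(1-\eps)\mu_{(k)}(j)+\eps\mu_{(k)}(j-1)$ (i.e.\ independently drop one extra car on each vertex with probability $\eps$) while keeping the process subcritical. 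If a parked cluster $C$ of size $\geq A\ln n$ with topmost vertex $x$ exists under $\mu$, then under $\tilde\mu^{\eps}$ every extra car landing in $C$ must overflow out of $x$, so by the law of large numbers $\tvphi_x(\cT_n)\geq \eps A\ln n/2$ with probability $\geq 1/2$. Conditioning on $|\Top(\cT_n,x)|$, using \eqref{EqCond} to pass to unconditioned $\cT$, paying the polynomial cost \eqref{PSize}, and applying the subcritical half of Theorem~\ref{GrandDev} to $\tvphi(\cT)$ then yields the bound $n^{5/2}\,n^{-\delta\eps A/2}$, which tends to $0$ for $A$ large. Without sprinkling your chain of implications breaks at the first link.

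In the supercritical case your route also diverges from the paper's, and unnecessarily so. The paper's structural input is a single clean statement (Proposition~\ref{LGsuper}): with high probability every vertex $x\in\cT_n$ with $|\Top(\cT_n,x)|\geq A\ln n$ is \emph{parked}. This follows from \eqref{EqCond}, \eqref{PSize} and the supercritical half of Theorem~\ref{GrandDev} via a first-moment bound. It immediately forces all such vertices (and hence the root) to lie in one parked cluster, because ancestors have even larger fringe subtrees; any other cluster therefore sits entirely below a vertex with $|\Top(\cT_n,x)|<A\ln n$, giving $|C_2(n)|<A\ln n$ for free. There is no need for the ``complement of the giant is subcritical'' machinery you sketch, nor for the uniqueness-by-exploration argument you flag as delicate. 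For the convergence $|C_{\max}(n)|/n\to C$, the paper then counts vertices by the distance to their nearest free ancestor and invokes Stufler's re-rooted local limit (the sin-tree $\cT^*$) to identify $C=\P(\forall k\geq 0,\ u_k\ \text{is parked in}\ \cT^*)$; your ``density of parked vertices with macroscopic local flux'' heuristic points in a similar direction but does not pin down the constant or the uniqueness, whereas the paper's Proposition~\ref{LGsuper} does both in one stroke.
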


\paragraph{Acknowledgments.} I would like to thank warmly Nicolas Curien for precious suggestions and corrections. I am also very grateful to Olivier Hénard for many interesting discussions.

\section{Phase transition and fringe subtrees}\label{PT}

In this section, we generalize the phase transition result of \cite{CH19} to our case, that is when the distribution of the car arrivals depends on the degree of the vertex. The strategy of proof is very similar and we will only highlight the necessary adaptations. The crux is the adaptation of \cite[Proposition 1]{CH19} into Proposition \ref{meanflux}.

\subsection{The mean flux and the probability that the root is parked in $\cT$}\label{MF}

We first obtain the expected outgoing flux of the unconditioned Galton--Watson tree using a differential equation. To this purpose, we let the cars arrive according to random times $A_x$ uniform in $[0,1]$ independently on each vertex $x$ of the tree $\cT$. More precisely, conditionally on $\cT$, we define a family $(A_x)_{x \in \cT}$ of i.i.d.~random variables with law $\operatorname{Unif}[0,1]$ independently of car arrivals $(L_x)_{x \in \cT}$. We denote by $\vphi (\cT , t) = \vphi (t)$ the outgoing flux on the root of $\cT$ after the parking procedure with car arrivals $L^{(t)}_x = \one_{A_x \leq t} L_x$ on each vertex $x \in \cT$ conditionally on $\cT$.  Note that conditionally on $\cT$, the car arrivals $(L_x^{(t)})_{x \in \cT}$ are independent with law $\mu_{(k)}^{(t)} = (1-t)\delta_0 + t\mu_{(k)}$ if $x$ has $k \geq 0$ children.
 
 \begin{prop}[Phase transition for the mean flux]\label{meanflux} For $t \in [0,1]$, we denote by $\Phi (t) = \E [\vphi (\cT , t) ]$ the mean flux at the root of $\cT$ with car arrivals with law $\mu^{(t)}_{(k)}$. Let $t_{\max}$ be the smallest solution to $ (1- \E_{\overline{\nu}} [m]t)^2 = \Sigma^2 \E_{\nu} [\sigma^2 + m^2 -m]t$ in $[0,1]$ (set $t_{\max} = + \infty$ if there is no such solution). Then, for $t \in [0,1]$
\begin{equation}\label{fluxdeg}
\Phi (t)=
\left\lbrace 
\begin{array}{ccc}
\dfrac{(1-  \E_{\overline{\nu}} [m]t ) - \sqrt{ (1- \E_{\overline{\nu}} [m]t)^2- \Sigma^2 \E_{\nu} [\sigma^2 + m^2 -m]t}}{\Sigma^2} & \mbox{if} & t \leq t_{\max}\\
+ \infty & \mbox{if} & t > t_{\max}.
\end{array}\right.
\end{equation}
 \end{prop}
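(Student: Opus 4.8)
The idea is to derive a differential equation for $\Phi(t) = \E[\vphi(\cT,t)]$ by differentiating in the arrival time parameter $t$, then solve this ODE explicitly. Since the cars arrive at i.i.d.\ uniform times $A_x \in [0,1]$, increasing $t$ to $t + \mathrm{d}t$ means that each vertex $x$ receives one extra car with probability $m_{(k)}\,\mathrm{d}t + o(\mathrm{d}t)$ (where $k$ is its degree), and independently across vertices. By the Abelian property, adding one more car to the configuration at time $t$ increases the flux by exactly $1$ if and only if that extra car fails to park, i.e.\ if and only if every vertex on the path from $x$ to the root is already occupied in the time-$t$ configuration.

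\textbf{Step 1: the infinitesimal increment.} First I would write $\Phi(t+\mathrm{d}t) - \Phi(t)$ as $\mathrm{d}t$ times the expected number of vertices $x$ of $\cT$ such that the whole ancestral line of $x$ (including $x$ and the root) is saturated at time $t$, weighted by $m_{(\deg x)}$, plus $o(\mathrm{d}t)$. A vertex $x$ at depth $h$ contributes a saturated ancestral line precisely when the root is parked and, recursively, each ancestor is parked \emph{within its own fringe subtree}. The key structural observation — exactly as in \cite[Proposition 1]{CH19} — is that "the ancestral line of $x$ is saturated" can be re-expressed in terms of independent events attached to the bushes hanging off that line, so that the expected count factorizes into a geometric-type sum. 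Concretely, if $p(t) := \P(\diameter \text{ is parked in } \cT \text{ at time } t)$ and one tracks the extra per-vertex flux contributions down a size-biased spine, the sum over all $x$ collapses and one obtains a closed relation between $\Phi'(t)$, $\Phi(t)$, and the moment parameters.

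\textbf{Step 2: identify the ODE and boundary condition.} I expect the resulting differential equation to take the form
\[
\Sigma^2 \Phi(t)\,\Phi'(t) \;=\; \E_{\nu}[\sigma^2 + m^2 - m] \;-\; 2\,\E_{\overline{\nu}}[m]\bigl(1 - \E_{\overline{\nu}}[m]t\bigr)\frac{\Phi'(t)}{\ \cdot\ }\ \text{(schematically)},
\]
or more cleanly an equation one can rearrange into $\frac{\mathrm{d}}{\mathrm{d}t}\bigl[(1-\E_{\overline{\nu}}[m]t - \Sigma^2\Phi(t))^2\bigr] = \text{const}$, with initial condition $\Phi(0) = 0$ (no cars arrive at $t=0$, so the flux is zero). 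Integrating and solving the quadratic for $\Phi(t)$ then yields the stated formula, with the minus sign in front of the square root forced by continuity from $\Phi(0)=0$. The threshold $t_{\max}$ is exactly the first time the discriminant $(1-\E_{\overline{\nu}}[m]t)^2 - \Sigma^2\E_{\nu}[\sigma^2+m^2-m]\,t$ vanishes; beyond it the flux is infinite because the expected number of exiting cars blows up (a monotonicity-plus-continuity argument shows $\Phi$ cannot re-enter finite values once the real branch disappears).

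\textbf{Main obstacle.} The delicate point is Step 1: justifying the factorization of the expected number of saturated ancestral lines into a tractable analytic expression, and in particular handling the degree-dependence of the arrival laws — this is where the size-biased distribution $\overline{\nu}$ enters, since moving down the spine of $\cT$ one traverses vertices whose number of children is $\overline{\nu}$-distributed relative to the bushes. One must also be careful that $\Phi(t)$ may already be $+\infty$, so the differentiation argument is only valid on $[0, t_{\max})$, and a separate (soft) argument is needed to rule out finiteness on $(t_{\max}, 1]$. I would follow the bookkeeping of \cite[Proposition 1]{CH19} closely, replacing their single arrival law $\mu$ by the family $(\mu_{(k)})_k$ and checking that each occurrence of the relevant moments gets replaced by its $\nu$- or $\overline{\nu}$-average; the exponential tail hypothesis \eqref{Hexp} guarantees that all the generating-function manipulations near $t \le t_{\max}$ are legitimate.
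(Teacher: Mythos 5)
Your overall strategy --- introduce i.i.d.\ uniform activation times $A_x$, differentiate $\Phi$ in $t$, reduce the sum over $x\in\cT$ to a sum along a size-biased spine via the many-to-one formula, and integrate the resulting ODE with $\Phi(0)=0$ --- is indeed the paper's. However, the explicit heuristic you give for $\Phi'(t)$ in Step~1 is not correct. When $A_x$ crosses $t$, \emph{all} $L_x \sim \mu_{(\deg x)}$ cars arrive at $x$ simultaneously (the thinning is $\mu^{(t)}_{(k)} = (1-t)\delta_0 + t\mu_{(k)}$, a single batch activated at time $A_x$, not a Poisson stream of single cars), and the number of those cars that exit is $(L_x - F)_+$, where $F$ is the number of free spots on the path from $x$ to the root at time $t$. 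This is positive whenever $L_x > F$, not only when $F=0$, so weighting each vertex by $m_{(\deg x)}$ and conditioning on a fully saturated ancestral line yields an ODE with the wrong right-hand side: it fails to produce the second-moment term $\tfrac{1}{2}\E_\nu[\sigma^2+m^2-m]=\E_\nu\bigl[\binom{L}{2}\bigr]$ that visibly appears under the square root in the statement.

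The paper's proof does not go through saturation indicators at all. For each spine length $h$ and top degree $k$, it computes $\E[I(s,h)\mathds{1}_{S_h \text{ has degree } k}]$, where $I(s,h)$ is the number of cars at $S_h$ that exit, by observing that the sequence of outgoing fluxes down the spine $(W^{(s)}_{(k),i})_{0\le i\le h}$ is a random walk with i.i.d.\ increments of law $Z^{(s)}-1$, and that summing over $h$ produces precisely the expected hitting time $\E_0[T^{(s)}_{-1}]$ of $-1$ by this walk. Wald's identity then evaluates $\E_0[T^{(s)}_{-1}]=(1-s\E_{\overline\nu}[m]-\Sigma^2\Phi(s))^{-1}$ when the drift is negative, while the prefactor $\tfrac{1}{2}(\sigma^2_{(k)}+m^2_{(k)}-m_{(k)})+km_{(k)}\Phi(s)$ records the combinatorics of the batch $L\sim\mu_{(k)}$ at $S_h$ competing with the fluxes arriving from $S_h$'s $k$ subtrees. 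This random-walk/hitting-time mechanism is the concrete content of the ``collapse'' you allude to; following the bookkeeping of \cite[Proposition 1]{CH19} literally (as you say you would) supplies it, but the infinitesimal heuristic as you have written it would not, and that is where the $\nu$- versus $\overline{\nu}$-averages of the moments actually get generated.
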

 
 \begin{proof} 
 We use the same notation as in \cite[Proposition 1]{CH19}: if $x$ is a vextex of $\cT$, we denote by $I^{x} (s)$ the number of cars that arrived at time $s$ on the vertex $x$ which contribute to $\vphi(t)$, i.e. those that did not manage to park at their arrival time $s \leq t$.
For $ t \in [0,1]$, we have
\begin{eqnarray*}
\Phi (t) = \E \left[ \sum_{x \in \cT}I^{x} (A_x) \mathds{1}_{0 \leq A_x \leq t}\right] 
= \E \left[ \sum_{x \in \cT}\sum_{k = 0}^{\infty}I^{x} (A_x) \mathds{1}_{0 \leq A_x \leq t} \mathds{1}_{ \{ x \text{ has degree } k \}}\right]
\end{eqnarray*}

Recall that the degree of a vertex $x$ in a tree $\mathbf{t}$ is a function of $\Top( x, \mathbf{t})$ the subtree of the descendants of $x$. We use the many-to-one formula (see e.g.~\cite[Formula 3]{CH19}) and integrate on $s = A_x$ to obtain 
\begin{eqnarray*}
 \Phi (t) = \int_{0}^t  \mathrm{d}s \sum_{k =0}^{\infty} \sum_{h =0}^{\infty}  \E \left[ I(s,h) \mathds{1}_{ \{ S_h \text{ has degree } k \} }\right],
\end{eqnarray*}  
where $I(s,h)$ is obtained as follows: First recall the construction of Kesten tree $\cT_{\infty}$. Consider a semi-infinite line $S_0, S_1, \ldots,$ rooted at $S_0$, called the \emph{spine}, and graft independently on each $S_i$ a random number $Y - 1$ of independent Galton–Watson trees where $Y \sim \overline{\nu}$, and consider a random uniform ordering of the children of $S_i$. 
Here, we define a tree $\cT(h)$, for $h \geq 0$, by considering only a finite line $S_0, S_1, \ldots, S_h$ and grafting  independently on each $S_i$ a random number $Y - 1$ of independent Galton–Watson trees where $Y \sim \overline{\nu}$ for $ 0 \leq i < h$, and consider a random uniform ordering of the children of $S_i$ and furthermore $X$ independent copies of $\cT$ on $S_h$ where $X \sim \nu$ (see Figure \ref{Tk}). This tree is decorated by letting cars arrive with law $\mu_{(l)}^{(s)}$ at each vertex of degree $l$ independently, except on the vertex $S_h$ where we put an independent number of cars distributed as $\mu_{(k)}$ (instead of $ \mu_{(k)}^{(s)}$) when $S_h$ has degree $k$. Then $I(s,h)$ is the number of those cars arriving on $S_h$ that do not manage to park after all other cars of $\cT(h)$ have parked.

\begin{figure}[h!]
\centering
\includegraphics[width=10.8cm]{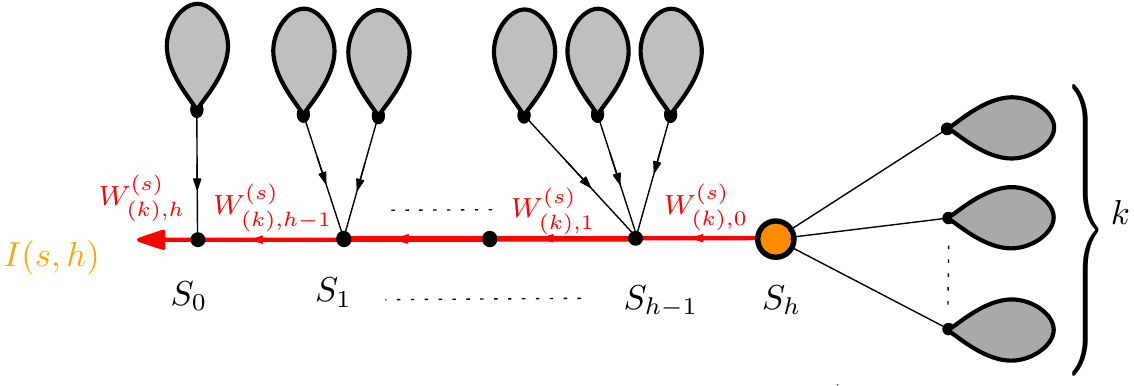} 
\caption{The tree $\cT(h)$ conditioned by $\{ S_h \text{ has degree } k \} $. }\label{Tk}
\end{figure}

To compute $\E \left[ I(s,h) \mathds{1}_{S_h \text{ has degree } k}\right]$, we use the fact that at time $s$ and on the event $\left\{ S_h \text{ has degree } k \right\} $, the outgoing flux from the vertex $S_{h-i}$ (before parking the cars arriving on $S_h$)  is a random walk $W_{(k),i}^{(s)}$ of length $h$ with  i.i.d.~increments of law $Z^{(s)} - 1$ with
\[ Z^{(s)} = \sum_{i = 1}^{Y-1} F_i^{(s)}  + P^{(s)}_{(Y)}\]
where $Y \sim \overline{\nu}$, the $F_i^{(s)}$ are copies of $F^{(s)} \sim \vphi(\cT , s )$ for $i \geq 0$, the $P^{(s)}_{(k)}$ have law $\mu_{(k)}^{(s)}$ for $k \geq 0$ and all the variables are independent. The starting point of the random walk $W_{(k),0}^{(s)}$ is distributed as the sum of  $k$ independent copies of $\vphi (\cT , s)$ minus $1$. We define $T_{-i}^{(s)} $ to be the first hitting time of $-i$ by the walk $W_{(k)}^{(s)}$ for $i \leq 0$ and we write  $ \mathbb{P}_x$ for the law of $W_{(k)}^{(s)}$ started at $x$ and $ \mathbb{E}_x$ for the corresponding expectation, for $x \in \mathbb{Z}$.
Summing over $h$, we obtain in the same way as in \cite[Proposition 1]{CH19}
\begin{eqnarray*}
\sum_{h =0}^{\infty} \E \left[ I(s,h)\mathds{1}_{ S_h \text{ has degree } k}\right] &=& \nu_k \left( \frac{1}{2} ( \sigma_{(k)}^2 + m_{(k)}^2 -m_{(k)}) + k m_{(k)} \Phi (s)\right)  \E_{0}\left[T_{-1}^{(s)} \right]. \\
\end{eqnarray*}
On the one hand, if $\E[ Z^{(s)} -1 ] \geq 0$, the random walk $W_{(k)}^{(s)}$ has a positive drift, hence $ \E_{0}\left[T_{-1}^{(s)} \right] = \infty$. On the other hand, if  $\E[ Z^{(s)} -1 ] < 0$, the random walk $W_{(k)}^{(s)}$ has a strictly negative drift and by Wald equality
\[\E_{0}\left[T_{-1}^{(s)} \right] = \frac{1}{\E[ 1-Z^{(s)} ] } = \frac{1}{1 -\E_{\overline{\nu}} [m]s-  \Sigma^2 \Phi (s)}.  \]

Summing over $k \geq 0$, we obtain $\Phi (0) = 0$, and for all $t \leq t_{c}$, 
\[ \Phi (t) =  \int_{0}^t \mathrm{d}s  \frac{\frac{1}{2}\E_{\nu} [\sigma^2 + m^2 -m]   + \E_{\overline{\nu}} [m] \Phi (s) }{1 - s \E_{\overline{\nu}} [m] - \Sigma^2 \Phi (s)},\]

where $t_c= \inf \{t \geq 0 : 1- \E_{\overline{\nu}} [m]t-  \Sigma^2 \Phi (t) <0 \}$. We can easly check that the function defined  on the right-hand side of \eqref{fluxdeg} satisfies this equation. It remains to check that both functions ``blow up" at the same time $t_{\max} = t_{c}$. This is done in the proof of Proposition 1 in \cite{CH19} using monotone and dominated convergence.
\end{proof}

When $ \mathbb{E}_{ \overline{ \nu}} [m ] \leq 1$, then $t \mapsto (1- \E_{\overline{\nu}} [m]t)^2 - \Sigma^2 \E_{\nu} [\sigma^2 + m^2 -m]t$ is decreasing over $[0,1]$. Hence, we obtain the following phase characterization for parking : when $t_{\max} <1$ then $\Theta < 0 $ (supercritical regime), when $t_{\max} =1$ then $ \Theta = 0$ (critical regime)  and when $t_{\max} >1$ then $\Theta >0$  (subcritical regime).
Using the same technique, we now control the probability that the root of an unconditioned Galton--Watson tree is parked as in \cite[Proposition 2]{CH19}.
 \begin{prop}[Probability that the root is parked]\label{probaroot} We have
 \begin{equation}
\P \left( \diameter \text{ is parked in } \cT \right)=
\left\lbrace 
\begin{array}{ccc}
  \E_{\nu} [m]& \mbox{\text{ if }} & \Theta \geq 0, \\
<    \E_{\nu} [m] & \mbox{ if } & \Theta < 0.
\end{array}\right.
\end{equation}
 \end{prop}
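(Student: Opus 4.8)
The plan is to mimic the differential-equation method used for Proposition \ref{meanflux}, but now track the probability that the spine vertex $\diameter$ is occupied rather than the mean flux. Introduce the time-parametrised arrivals $L_x^{(t)} = \one_{A_x \le t}L_x$ exactly as before, and set $p(t) = \P(\diameter \text{ is parked in } \cT \text{ with arrivals } \mu_{(k)}^{(t)})$. The quantity of interest is $p(1)$. Clearly $p(0)=0$, and the derivative $p'(t)$ should count the infinitesimal rate at which, upon revealing the arrival clock of some vertex $x$ crossing level $t$, a car newly manages to reach and occupy the root. As in the proof of Proposition \ref{meanflux}, differentiating and using the many-to-one formula along the spine reduces $p'(t)$ to an expectation over the size-biased spine tree $\cT(h)$: conditionally on the degree $k$ of $S_h$, the outgoing flux of $S_{h-i}$ before the cars on $S_h$ arrive is the random walk $W_{(k)}^{(s)}$ with increments $Z^{(s)}-1$, and $\diameter$ gets parked precisely when the relevant excursion of this walk behaves appropriately. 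One then sums over $h$ using the hitting-time identity $\E_0[T_{-1}^{(s)}] = (1 - s\E_{\overline\nu}[m] - \Sigma^2\Phi(s))^{-1}$ valid when $\Theta \ge 0$ (equivalently $t_{\max}\ge 1$), which guarantees the denominator stays positive on $[0,1]$.

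Carrying this out, I expect $p(t)$ to satisfy an ODE of the form $p'(t) = G(t, p(t), \Phi(t))$ where $G$ is built from $\E_\nu[m]$, $\E_{\overline\nu}[m]$, $\Sigma^2$ and the known $\Phi$; the natural guess — to be verified by direct substitution, just as the closed form for $\Phi$ was checked against its integral equation — is that the solution is simply $p(t) = \E_\nu[m]\,t$ as long as $t \le t_{\max}$. Setting $t=1$: if $\Theta \ge 0$ then $t_{\max}\ge 1$, so $p(1) = \E_\nu[m]$, giving the first case. This matches the heuristic that in the (sub)critical regime every car parks, so the probability the root is occupied equals the expected number of cars arriving at it, $\E_\nu[m]$ (each such car, and nothing else, can end up at $\diameter$; by the Abelian property the root is occupied iff at least one car reaches it, and in the no-flux regime a short argument shows this has probability exactly $\E_\nu[m]$ — indeed one can also see $\P(\diameter$ parked$) = \E_\nu[m] - (\text{expected flux through }\diameter \text{ contributed by root's own would-be-parkers})$, which vanishes when the global flux does).

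For the supercritical case $\Theta<0$, we have $t_{\max}<1$, and $\Phi(t)\to\infty$ as $t\uparrow t_{\max}$. Here the closed-form ODE solution breaks down past $t_{\max}$, but the point is softer: when there is macroscopic flux, with positive probability a car that ``would have'' parked at $\diameter$ is instead displaced by the flux coming from below and exits, so the probability the root is parked is strictly less than $\E_\nu[m]$. More precisely, one writes $\P(\diameter$ parked$) = \E_\nu[m] - r$ where $r \ge 0$ is the expected number of root-destined cars that exit, and one must show $r>0$ strictly. This can be extracted by a direct coupling/monotonicity argument: condition on the root having degree $k$ with $\nu_k\mu_{(k)}\neq\delta_0$ contributing mass, and on the event (of positive probability, since $\Theta<0$ forces positive flux, by the table in Theorem \ref{DegDep}) that the total incoming flux from the $k$ subtrees rooted at the children is at least $1$ while also a car arrives directly at $\diameter$; on this event some car necessarily exits that would otherwise have occupied $\diameter$. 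I expect the main obstacle to be precisely this strict-inequality step — the ODE bookkeeping gives the clean equality for $\Theta\ge0$ almost for free, but proving $r>0$ requires genuinely using that the flux is positive (not merely that the formula for $\Phi$ has a singularity), i.e. invoking the first line of the table in Theorem \ref{DegDep} in the supercritical column, and making sure the conditioning events used actually carry positive probability under the standing non-degeneracy hypotheses ($\nu\neq\delta_1$ and some $\nu_k>0$ with $\mu_{(k)}\neq\delta_1$).
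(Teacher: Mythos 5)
Your first-part strategy --- time-parametrized arrivals, differentiate $p(t)$, apply the many-to-one formula along the spine --- is exactly the paper's, but you stop at a guess ($p(t)=\E_\nu[m]\,t$) rather than computing the derivative. The paper's computation \emph{is} the proof: after reducing to the spine tree $\cT(h)$, the event that a car arriving on $S_h$ at time $s$ reaches $\diameter$ is $\bigcup_{i=1}^{L_{(k)}}\{T_{-i}^{(s)}=h\}$ under $\P_{W_{(k),0}^{(s)}}$, and summing over $h$ gives $\nu_k\,\E\bigl[\sum_{i=1}^{L_{(k)}} q_s^{\,W_0^{k,(s)}+i}\bigr]$ with $q_s=\P_0(T_{-1}^{(s)}<\infty)$. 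When $s\le t_{\max}$ the walk has non-positive drift so $q_s=1$ and the bracket reduces to $m_{(k)}$; when $s>t_{\max}$ the drift is positive, $q_s<1$, and the same expression is strictly smaller. In particular the derivative turns out not to depend on $p(s)$ or $\Phi(s)$ at all for $s\le t_{\max}$, and both branches of the proposition fall out of this one identity --- there is no separate strict-inequality step to engineer.

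Your supercritical argument has two genuine problems. First, the identity $\P(\diameter\text{ parked})=\E_\nu[m]-r$ with $r$ ``the expected number of root-destined cars that exit'' is not a correct decomposition: $\diameter$ can be parked by a car overflowing from a child even when no car arrives at $\diameter$, so its occupation probability is not a function of the fate of the root's own arrivals. (The nearby correct identity --- take expectations in $X=\sum_{i=1}^N (X_i-1)_++P_N$ and use $\P(X\ge1)=\E[X]-\E[(X-1)_+]$ --- does give $\E_\nu[m]$, but only when $\E[\vphi(\cT)]<\infty$; for $\Theta<0$ this is $\infty-\infty$ and yields nothing.) Second, invoking the first line of the table of Theorem \ref{DegDep} to certify positive flux would be circular, since in the paper that line is derived from the present proposition via Proposition \ref{LGN}. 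What you may legitimately quote at this stage is Proposition \ref{meanflux}, which already gives $\E[\vphi(\cT)]=\infty$ when $\Theta<0$; but even so, converting ``positive flux with positive probability'' into a strict deficit in $\P(\diameter\text{ parked})$ still requires an argument, which the paper gets for free from $q_s<1$.
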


 \begin{proof}We proceed as in the previous proposition and let the cars arrive according to random times $A_x$ independently for each vertex $x$  on the tree $\cT$. Let $p_t = \P( \diameter \text{ contains a car in }(\cT , t))$. Then, 
 \begin{eqnarray*}
 p_t = \int_0^{t} \mathrm{d}s \sum_{h \geq 0} \P (P(s,h))
 \end{eqnarray*}
 where $P(s,h)$ is the event that  in the labeled tree $\cT(h)$, one car arriving on the vertex $S_h$ at time $s$ goes down the spine and manages to park on the empty root $\diameter$. Moreover, conditionally on $\left\{ S_h \text{ has } k\text{ children} \right\}$, the event $P(s,h)$ is $ \cup_{i=1}^{L_{(k)}} \left\{ T_{-i}^{(s)} =h \right\}$ under $\P_{W_{(k),0}^{(s)}}$ where $L_{(k)} \sim \mu_{(k)}$. Thus, 
\begin{eqnarray*}
\sum_{h=0}^{\infty} \P (P(s,h) \cap  \left\{ S_h \text{ has } k\text{ children} \right\} ) &=& \nu_k \E \left[ \sum_{i=1}^{L_{(k)}} \P_{W_0^{k,(s)}} (T_{-i}^{(s)} < \infty )\right] \\
&=&  \nu_k \E \left[ \sum_{i=1}^{L_{(k)}} \P_{0} (T_{-1}^{(s)} < \infty )^{W_0^{k,(s)} + i }\right] \\
&=& \left\{\begin{array}{ccc} 
  \nu_{k} m_{(k)} & \mbox{\text{ if }} & s \leq t_{\max} \\
 <   \nu_{k} m_{(k)} & \mbox{ if } & s > t_{\max}. \end{array}\right.
\end{eqnarray*}
 Summing over $k$ and integrating over $s$, we get the desired result.
 \end{proof}

\subsection{Fringe subtrees and weak law of large numbers for the flux} \label{SecFringe}

We now want to show a convergence result for the flux of the conditioned Galton--Watson tree $\cT_n$.
To this end, a useful tool will be the \L ukasiewicz walk (in the depth-first order) of the Galton--Watson tree (see \cite{LG}), decorated with the car arrivals on each vertex. Therefore in the rest of  the paper, we consider $(S,L)$ a random process where $S$ is a random walk with starting point $S_0 = 0$ and  i.i.d.~increments of law $\P( S_1 = k) = \nu_{k+1}$ for $ k \geq - 1$, and conditionally on $(S)$, the $(L_i)_{i \geq 0}$ are independent of law $\mu_{(S_{i+1}- S_{i}+1)}$. Note that the $(L_i)_{i \geq 0}$ are i.i.d.~and $\E[L_0] = \E_{\nu} [m]$. 
We also define $T$ the first hitting time of $-1$ by the walk $S$. Then the law of $(S_i, L_i)_{0 \leq i \leq n}$ conditionally on $\left\{ T = n \right\}$ is the law of the \L ukasiewick walk of a Galton--Watson tree $\cT_n$ conditioned to have $n$ vertices ``decorated" with the car arrivals on each vertex. Therefore, we couple $(S,L)$ with $\cT_n$ and $\cT$ so that $(S,L)_{0 \leq i \leq T}$ (resp.~$(S,L)_{0 \leq i \leq n}$) is the \L ukasiewick walk of $\cT$ (resp.~$\cT_n$) decorated with the car arrivals conditionally on $T$ (resp.~$\left\{ T = n \right\}$).

\begin{prop}[Law of large number for the flux]\label{LGN} The flux at the root of $\cT_n$ satisfies 
\[  \frac{\varphi(\cT_n)}{n} \underset{n \to \infty}{\xrightarrow{(\P)}} \E_{\nu} [m ]  - \P \left( \diameter \text{ is parked in } \cT \right). \]
\end{prop}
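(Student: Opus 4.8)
The plan is to express $\vphi(\cT_n)$ as an additive functional of the fringe subtrees of $\cT_n$ and then to apply the law of large numbers for such functionals of conditioned Galton--Watson trees (in the spirit of Aldous \cite{A91} and Janson \cite{J16,J12}).

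Two elementary observations underlie the decomposition. First, since every car only moves towards the root, a car can come to rest at a vertex $x$ only if it arrived somewhere in $\Top(\cT_n,x)$, and its whole trajectory up to the instant it reaches $x$ stays inside $\Top(\cT_n,x)$; hence the set of occupied vertices of $\Top(\cT_n,x)$ produced by parking on $\cT_n$ coincides with the one produced by running the parking procedure on $\Top(\cT_n,x)$ on its own. In particular, the indicator that $x$ is occupied is a deterministic functional $g$ of the decorated subtree $(\Top(\cT_n,x),(L_y)_{y\in\Top(\cT_n,x)})$, namely the indicator that the root of this decorated tree gets occupied. Second, in the parking procedure on $\cT_n$ each arriving car either parks at a single vertex or exits through $\diameter$, and each vertex hosts at most one car; therefore, writing $\Lambda(\cT_n)=\sum_{x\in\cT_n}L_x$,
\begin{equation}\label{LGNdecomp}
\vphi(\cT_n)\;=\;\Lambda(\cT_n)-\#\{x\in\cT_n:x\text{ is occupied}\}\;=\;\sum_{x\in\cT_n}f\big(\Top(\cT_n,x),(L_y)_{y\in\Top(\cT_n,x)}\big),
\end{equation}
where $f(\mathbf t,\ell):=\ell(\diameter)-g(\mathbf t,\ell)$, with $\diameter$ denoting the root of $\mathbf t$.

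Now I would invoke the fringe law of large numbers. Since $\nu$ is critical and $\nu\neq\delta_1$, the tree $\cT$ is a.s.\ finite, so the toll $f$ is a.s.\ a continuous functional of the decorated tree for the local topology; moreover $-1\leq f(\mathbf t,\ell)\leq\ell(\diameter)$, so $|f(\cT,(L_y))|\leq 1+L_\diameter$ is integrable (indeed it has an exponential moment, by \eqref{Hexp}). Using the convergence in probability of the empirical fringe measure $\frac1n\sum_{x\in\cT_n}\delta_{(\Top(\cT_n,x),(L_y)_y)}$ towards the law of $(\cT,(L_y)_{y\in\cT})$ — the decorated version of Aldous' fringe-tree theorem \cite{A91}, see also \cite{J16,J12} — together with the integrability just noted to control the unbounded part of $f$, the right-hand side of \eqref{LGNdecomp} divided by $n$ converges in probability to $\E[f(\cT,(L_y)_{y\in\cT})]$. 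Finally $\E[f(\cT,(L_y))]=\E[L_\diameter]-\P(\diameter\text{ is occupied in }\cT)=\E_\nu[m]-\P(\diameter\text{ is parked in }\cT)$, because the root of $\cT$ has $k$ children with probability $\nu_k$ and then carries a $\mu_{(k)}$-distributed number of cars of mean $m_{(k)}$.

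The decomposition \eqref{LGNdecomp} and the subtree-measurability of occupation are routine; the crux is the fringe law of large numbers for the specific toll $f$, and in particular the passage from weak fringe convergence to the convergence of the averages, since $f$ is not determined by a bounded-depth truncation of the subtree. This is precisely where one uses the a.s.\ finiteness of $\cT$: one approximates $g$ and $\ell(\diameter)$ by their depth-$h$ truncations, with an error tending to $0$ as $h\to\infty$ — thanks to a.s.\ finiteness for the bounded part $g$ and to the exponential moment of \eqref{Hexp} for $\ell(\diameter)$. Should one prefer a self-contained argument, the same conclusion follows from first- and second-moment estimates: using the \L ukasiewicz encoding of $(\cT_n,(L_i))$ conditioned on $\{T=n\}$ and decomposing it into the sub-excursions coding the subtrees $\Top(\cT_n,x)$, one compares with the unconditioned excursion to obtain $\E[\vphi(\cT_n)]=n\big(\E_\nu[m]-\P(\diameter\text{ parked in }\cT)\big)+o(n)$ and bounds $\mathrm{Var}(\vphi(\cT_n))=o(n^2)$, the latter using that any two fringe subtrees are either nested or vertex-disjoint, so that distinct tolls are only weakly correlated.
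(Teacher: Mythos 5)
Your proof is correct and follows essentially the same route as the paper: conservation of cars to write $\vphi(\cT_n)=\sum_{x\in\cT_n}L_x-\#\{x\in\cT_n : x\text{ parked}\}$, the key observation that the parked status of a vertex is a deterministic functional of its decorated fringe subtree, and the fringe-subtree law of large numbers of Janson/Aldous to pass to the limit. The only difference is cosmetic: the paper controls $\sum_x L_x/n$ separately via a large-deviation bound on the \L ukasiewicz walk (re-used in the proof of Theorem \ref{GrandDev}) while you fold both terms into a single additive toll $f$; your remark on truncation and uniform integrability for $f$ flags a technical point the paper leaves implicit when it invokes Janson's fringe theorem.
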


\begin{proof}
By conservation of cars,  the total number of cars arriving on $\cT_n$ is 
\begin{eqnarray*}
\left(  \sum_{x \in \cT_n } L_x \right) =  \varphi(\cT_n)  + \sum_{x \in \cT_n } \one_{x \text{ is parked}}.
\end{eqnarray*}
We first prove that the proportion of arriving cars per vertex in $\cT_n$ converges in probability towards $\E_{\nu}[m]$ as $n$ goes to $\infty$. Recall that we defined a random walk $S$ with i.i.d~increments of law $\nu$ together with $L$ the car arrival ``decoration" and $T$ the first hitting time of $-1$, so that conditionally on $\{T = n\}$ and on $\cT_n$, the car arrival on $v_k$, the $k$th vertex of $\cT_n$ in the depth-first order, is $ L_{v_k} = L_{k-1}$  and has law $\mu_{(S_{k} - S_{k-1} +1)}$ for $1 \leq k \leq n$. 
Then, for all $\eps >0$,
\begin{eqnarray}\label{DevTotCars}
\P\left( \left|  \sum_{x \in \cT_n } L_x - \E_{\nu}[m]n  \right| \geq \eps n \right) &\leq& \dfrac{\P \left( \left| \sum_{k=0}^{n-1}  (L_{k} - \E_{\nu}[m] ) \right| \geq \eps n \right)}{\P ( T = n)}.
\end{eqnarray}
Since the $L_k$ are i.i.d.~and their common law has by assumption \eqref{Hexp} an exponential tail, we can use large deviations inequalities and bound the above numerator by $e^{- \delta n }$ for some $\delta >0$. Moreover we recall the classical asymptotic 
\begin{equation} \label{PSize} \P (T = n)  \sim C n^{-3/2}
 \end{equation}
for some $C>0$ as $n$ goes to $+ \infty$ (at least along values for which $\P (T =n) >0$). Hence the probability on the left-hand side converges to $0$ as $n$ goes to $\infty$ and does so exponentially fast. 

Then we observe that the degree of a vertex $x$ in a given tree $\mathbf{t}$ is a function of $\Top( x, \mathbf{t})$, the subtree of the descendants of $x$. Therefore we can use the theorem of Janson \cite[Theorem 1.3, formula (1.11)]{J16} which states that the fringe subtree distribution of a conditioned Galton--Watson tree $\sum_{x \in \cT_n}\delta_{\Top( x, \cT_n)} /n$ converges in probability to the $\nu$-Galton--Watson measure. Adding car arrivals decoration on each vertex, this implies that
\[ \frac{1}{n} \sum_{x \in \cT_n } \one_{x \text{ is parked}} \underset{n \to \infty}{\xrightarrow{(\P)}} \P \left( \diameter \text{ is parked in } \cT \right). \]
The desired result follows. 
\end{proof}

We have seen above that the total number of cars arriving on $\cT_n$ concentrates around its (unconditioned) expectation $\E_{\nu}[m]$. 
To obtain large deviations for the flux (at least in the case of supercritical parking process), we also need a large deviations result for the fringe subtree distribution. We establish such a result in a more general context, which concerns not only the subtree of the descents of the vertices, but a more general local neighborhood.
Let $\mathbf{t}$ be a plane tree, $x \in \mathbf{t}$ be a vertex of $\mathbf{t}$ and $k \geq 0$. We define $H_k(\mathbf{t},x) = \Top (\mathbf{t}, x_k)$  where $x_k$ is the $k$th ancestor of $x$ if there is one. Otherwise, we just say $H_k(\mathbf{t},x) = \lozenge$.
When $\mathbf{t} = \cT_n$ and $x = \mathfrak{u}_n$ is a uniform vertex of $\cT_n$, Aldous \cite{A91} (see also Stufler \cite{St19}) has proved that $\cT_n$, seen from the vertex $\mathfrak{u}_n$ converges in distribution (for the local topology) towards an infinite plane tree with almost surely one spine $\cT^*$ called the random sin-tree: Consider $(u_{k})_{k \geq 0 }$ a semi-infinite path, ``pointed" at $u_0$, such that $u_{k+1}$ is the ancestor of $u_k$ for all $k\geq 0$. Then graft independently on $u_0$ a random number $X$ of independent Galton–Watson trees where $X \sim \nu$ and on each $u_k$ for $k \geq 1$ a random number $Y - 1$ of independent Galton–Watson trees where $Y \sim \overline{\nu}$, and consider a random uniform ordering of the children of $u_k$. Note that for all $k \geq 0$, the subtree $\Top( \cT^*, u_k) = H_k(\cT^*,u_0)$ has law $\cT(k)$ (see Figure \ref{Tk}).
Aldous' sin-tree $\cT^*$ is closely related to the Kesten tree $\cT_{\infty}$: whereas $\cT_{\infty}$ describes the local limit of $\cT_n$ near the root vertex, $\cT^*$ describes its local limit near a ``typical" vertex.

\begin{prop}[Large deviations for the fringe subtrees]\label{thmloc} Let $\mathbf{t}$ be a (fixed) plane finite tree and $k \geq 0$ an integer such that the height of $\mathbf{t}$ is at least $k$. For every $\eps >0$, there exists $\delta >0$ and $n_0 \geq 0$ such that for all $n \geq n_0$, 
\[ \P\left( \left| \frac{1}{n}\sum_{x \in \cT_{n}} \mathds{1}_{H_k(\cT_n,x) = \mathbf{t}}  - \P ( H_k(\cT^*, u_0)= \mathbf{t}) \right| \geq \eps \right) \leq e^{-\delta n},\]
\end{prop}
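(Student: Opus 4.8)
The quantity $N_{\mathbf{t},k}(\cT_n) := \sum_{x \in \cT_n} \mathds{1}_{H_k(\cT_n,x) = \mathbf{t}}$ counts vertices $x$ whose "$k$-th-ancestor-rooted" subtree $\Top(\cT_n, x_k)$ equals $\mathbf{t}$. The first observation is that this is \emph{almost} an additive functional of fringe subtrees: if $y$ is a vertex of $\cT_n$ with $\Top(\cT_n, y) = \mathbf{t}$, then $y$ has exactly $N_k(\mathbf{t})$ descendants at distance $k$ below it (where $N_k(\mathbf{t})$ is the number of vertices of $\mathbf{t}$ at height $k$, a fixed constant since $\mathbf{t}$ is fixed and has height $\geq k$), and each such descendant $x$ satisfies $H_k(\cT_n,x) = \mathbf{t}$. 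Conversely every $x$ contributing to $N_{\mathbf{t},k}$ arises this way from a unique $y = x_k$. Hence
\[
N_{\mathbf{t},k}(\cT_n) = N_k(\mathbf{t}) \sum_{y \in \cT_n} \mathds{1}_{\Top(\cT_n, y) = \mathbf{t}},
\]
which is $N_k(\mathbf{t})$ times a genuine additive fringe-subtree functional. So it suffices to prove the large deviation bound for $\frac{1}{n}\sum_{y \in \cT_n} \mathds{1}_{\Top(\cT_n,y) = \mathbf{t}}$, whose limit is $\P(\Top(\cT,\diameter) = \mathbf{t})$; and one checks (via the grafting description of $\cT^*$) that $N_k(\mathbf{t}) \, \P(\Top(\cT,\diameter) = \mathbf{t}) = \P(H_k(\cT^*,u_0) = \mathbf{t})$, matching the stated centering constant.

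\textbf{Reduction to the \L ukasiewicz walk.} Via the coupling fixed in Section~\ref{SecFringe}, $\cT_n$ is encoded by $(S_i, L_i)_{0 \le i \le n}$ conditioned on $\{T = n\}$. The key point, as in the proof of Proposition~\ref{LGN}, is that whether $\Top(\cT_n, y) = \mathbf{t}$ holds depends only on a \emph{finite window} of the \L ukasiewicz walk: writing $|\mathbf{t}| = m$, the subtree rooted at the $k$-th vertex $v_k$ in depth-first order equals $\mathbf{t}$ iff the increments $(S_{j} - S_{j-1})_{k < j \le k+m}$ spell out the correct degree sequence of $\mathbf{t}$ in depth-first order \emph{and} $S$ does not go below $S_k$ strictly before step $k+m$ (so that this block is exactly a subtree) — a condition on $(S_{k+1}-S_k, \ldots, S_{k+m}-S_k)$ only. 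Therefore $\sum_{y \in \cT_n}\mathds{1}_{\Top(\cT_n,y)=\mathbf{t}} = \sum_{k=0}^{n-1} g(\Delta_{k+1}, \ldots, \Delta_{k+m})$ for a fixed bounded function $g$ of $m$ consecutive increments $\Delta_j = S_j - S_{j-1}$ (with a negligible $O(m)$ boundary correction near $k = n$). This is a bounded functional of an $m$-dependent stationary sequence of i.i.d.\ increments.

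\textbf{The large deviation step and the conditioning.} For the \emph{unconditioned} walk, $\frac{1}{n}\sum_{k=0}^{n-1} g(\Delta_{k+1},\ldots,\Delta_{k+m})$ satisfies a large deviation upper bound $\P(|\cdot - \E[g]| \ge \eps) \le e^{-\delta' n}$: this is a standard Cramér-type / blocking estimate for bounded functionals of i.i.d.\ sequences (split the $m$-windows into $m$ interleaved classes, each a sum of i.i.d.\ bounded terms, and apply the classical Chernoff bound to each), and one identifies $\E[g] = \P(\Top(\cT,\diameter) = \mathbf{t})$ by the many-to-one / size-bias computation already used in Proposition~\ref{meanflux}. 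To pass to $\cT_n$, condition on $\{T = n\}$ exactly as in \eqref{DevTotCars}:
\[
\P\!\left(\Big|\tfrac{1}{n}\textstyle\sum_{y \in \cT_n}\mathds{1}_{\Top(\cT_n,y)=\mathbf{t}} - \P(\Top(\cT,\diameter)=\mathbf{t})\Big| \ge \eps\right) \le \frac{e^{-\delta' n} + O(m/n)\text{-terms}}{\P(T = n)},
\]
and since $\P(T = n) \sim C n^{-3/2}$ by \eqref{PSize}, the polynomial factor is absorbed into a slightly smaller exponent $\delta$, giving $e^{-\delta n}$ for $n \ge n_0$.

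\textbf{Main obstacle.} The genuinely delicate point is handling the event "$S$ stays $\ge S_k$ on the block $(k, k+m]$" cleanly — i.e.\ making precise that $\Top(\cT_n, v_k) = \mathbf{t}$ is truly a function of only finitely many increments and does not secretly depend on the global conditioning or on where in the walk the block sits. Once the functional is written as $\sum_k g(\Delta_{k+1}, \ldots, \Delta_{k+m})$ with $g$ bounded and depending on a fixed number of coordinates, the large-deviation machinery is routine; the care lies in the combinatorial bookkeeping translating "fringe subtree equals $\mathbf{t}$" and "$H_k$ equals $\mathbf{t}$" into increment-window statements, plus checking that the two centering constants agree via the $\cT^*$ construction.
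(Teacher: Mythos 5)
Your proof is correct and takes a genuinely different route from the paper's. The paper handles general $k$ directly: it encodes the event $\{H_k(\cT_n,v_j)=\mathbf{t}\}$ as $\{f_{\mathbf{t}}(S^{(j)})=1\}$ for a function $f_{\mathbf{t}}$ that first scans \emph{backwards} in the \L ukasiewicz walk (via iterated weak descending ladder times $\tau_0,\ldots,\tau_k$) to locate the $k$-th ancestor and then checks a forward window; it then truncates the increments at $M+1$ so that the shifted-walk window $(\tS^{(j)}_\ell)_{-|\mathbf{t}|\le \ell\le |\mathbf{t}|}$ becomes a Markov chain with \emph{finite} state space, and invokes Sanov's theorem for the large deviation bound before dividing by $\P(T=n)\sim Cn^{-3/2}$.

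You instead prove and use the exact combinatorial identity $\sum_{x}\mathds{1}_{H_k(\cT_n,x)=\mathbf{t}} = N_k(\mathbf{t})\sum_{y}\mathds{1}_{\Top(\cT_n,y)=\mathbf{t}}$, which collapses the problem to the fringe case $k=0$ (a forward window of only $|\mathbf{t}|$ increments, with no backward scanning), and you verify the matching identity for the limit constant $N_k(\mathbf{t})\,\P(\cT=\mathbf{t})=\P(\cT(k)=\mathbf{t})=\P(H_k(\cT^*,u_0)=\mathbf{t})$ directly from the grafting description of $\cT^*$. You then replace the paper's truncation-plus-Sanov step with the elementary ``split into $m$ interleaved classes and apply Chernoff to each'' device for $m$-dependent functionals of i.i.d.\ increments, and the conditioning on $\{T=n\}$ is absorbed exactly as in the paper via \eqref{PSize}. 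Your reduction is cleaner and more elementary — it avoids the Markov-chain machinery entirely and sidesteps the delicate point (which you flag as the ``main obstacle'') of showing that the backward ancestor-locating scan only inspects a bounded window; the paper's approach has the advantage of staying strictly within a single general-purpose large-deviations theorem and of not requiring the $k$-to-$0$ reduction, which could matter for statistics that are genuinely non-factorizable over height-$k$ descendants. Both routes give the stated exponential bound.
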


\begin{proof} We shall prove the result using the \L ukasiewicz walk of $\cT_n$ which is almost a random walk and for which large deviations for density of patterns is easy to see.
Let $L^{\mathbf{t}} = (L^{\mathbf{t}}_i)_{0 \leq i \leq |\mathbf{t}|}$ be the \L ukasiewicz walk of $\mathbf{t}$  and $(v_i)_{0 \leq i \leq |\mathbf{t}|-1}$ be the vertices of $\mathbf{t}$ in the depth-first order, so that $L^{\mathbf{t}}_{i+1} - L^{\mathbf{t}}_i +1$ is the number of children of $v_i$ in $\mathbf{t}$ for $0 \leq i \leq |\mathbf{t}| -1$. We denote by $M$ the maximum of the increments of $L^{ \mathbf{t}}$. 

We extend the walk $(S)$ as a bi-infinite walk by setting $S_j = 1$ for $j<0$. For $j  \in \Z$, we write $(S_i^{(j)} = S_{i+j} - S_{j})_{i \in \Z}$ for the walk shifted at time $j$. We claim that there exists a function $f_{\mathbf{t}}$ defined over bi-infinite paths and taking values in $\{0,1 \}$ such that  conditionally on $\{ T = n \}$, 
\begin{align}\label{FoncF}
H_k (\cT_n, v_j) = \mathbf{t} \qquad  \Longleftrightarrow  \qquad f_{\mathbf{t}}(S^{(j)}) = 1,
\end{align}
where $(v_k)_{0 \leq k \leq n-1}$ are the vertices of $\cT_n$ listed in the depth-first order.
Specifically, $f_{\mathbf{t}}$ is defined as follows: considering $W \in \Z^{\Z}$, we first define $\tau_0 = 0$, then $\tau_{1} = \sup \{  j < \tau_{0}, W_j \leq W_{\tau_0}  \}$ if it exists ($ - \infty$ otherwise), and so on up to $\tau_{k} = \sup \{  j < \tau_{k-1} , W_j \leq W_{\tau_{k-1}}  \} $, so that if $\tau_{k} <\infty$, the $\tau_{i}$'s correspond to the location of the ancestors of the vertex ``0". We then set $f_{\mathbf{t}}(W) =1$ if and only if  $\tau_{k} \geq - | \mathbf{t}|$ and $(W_{j}^{(\tau_{k})})_{0 \leq j \leq |\mathbf{t}|} = L^{\mathbf{t}}$. In particular, the value of $f_{\mathbf{t}} (W)$ only depends on $(W_{k})_{-| \mathbf{t}| \leq k \leq | \mathbf{t}|}$. More than that, if $\tau_{k} \geq - \mathbf{t}$ and $W_{k+1} - W_k > M$ for some $\tau_{k}\leq k \leq  \tau_{k} + | \mathbf{t}|$, then $f(W) = 0$ (and changing the values of $W_k$ for $ k \leq \tau_{k} $ or  $k \leq \tau_{k} + | \mathbf{t}|$ does not change the value of $f_{ \mathbf{t}}$). Therefore we consider the random walk $(\tS_k)_{k \in \Z}$ such that $\tS_k = S_k$ for $ k \leq 0$ and $\tS_{k+1} - \tS_k = (S_{k+1} - S_k) \wedge (M+1)$, and the corresponding shifted walk $\tS^{(j)}$.

Using \eqref{FoncF}, we obtain
\begin{align*}
&\P\left( \left| \frac{1}{n}\sum_{x \in \cT_{n}} \mathds{1}_{H_k(\cT_n,x) = \mathbf{t}}  - \P (H_k(\cT^*,u_0) = \mathbf{t})  \right| \geq \eps \right) \\
&= \P\left( \left| \frac{1}{n}\sum_{j =0}^{n-1} f_{\mathbf{t}}(\tS^{(j)}) - \P (H_k(\cT^*,u_0) = \mathbf{t})\right| \geq \eps \middle| T = n\right) \\
&\leq \dfrac{\P\left( \left| \frac{1}{n}\sum_{j =0}^{n-1} f_{\mathbf{t}}(\tS^{(j)})- \P (H_k(\cT^*,u_0)= \mathbf{t})\right| \geq \eps \right)}{\P(T = n)}.
\end{align*}

Apart from the first $ 0 \leq j \leq | \mathbf{t}|$ values, the function $ j \mapsto f(\tS^{(j)}) = f((\tS^{(j)}_{k})_{-| \mathbf{t}| \leq k \leq | \mathbf{t}|})$ is a function of the underlying Markov chain $((\tS^{(j)}_{k})_{| \mathbf{t}| \leq k \leq | \mathbf{t}|})_{j \geq 0}$. 
Furthermore, by Aldous \cite[Proposition 10]{A91} we can see that $ \P (H_{k}(\mathcal{T}^{*},u_0) = \mathbf{t} )$ is the probability that $f_{ \mathbf{t}} (\tS) = 1$ under the stationary distribution of the Markov chain $((\tS^{(j)})_{-| \mathbf{t}| \leq k \leq | \mathbf{t}|})_{j \geq | \mathbf{t}|}$ which is simply the law of the two-sided random walk with increments $ \nu_{ (\cdot +1)\wedge (M+1)}$. Since $\nu_{ (\cdot +1)\wedge (M+1)}$ has finite support, this is a Markov chain with finite state space, and  Sanov's theorem \cite[Section 6.2]{DZ93} implies large deviations for the numerator, which is bounded above by $e^{- \delta n}$ for some $ \delta > 0$ for $n$ large enough.
Using \eqref{PSize}, we get the desired result.
\end{proof}

\proof[Sketch of Proof of Theorem \ref{DegDep}] 
The second line of Theorem \ref{DegDep} can be easily derived from Proposition \ref{meanflux}.
Moreover Proposition \ref{probaroot} already gives us the third line of the table of Theorem \ref{DegDep} and together with Proposition \ref{LGN}, we obtain that the flux is linear when $ \Theta <0$ and sublinear when $ \Theta \geq 0$. There only remains to check that  the flux converges in law in the subcritical case and diverges in the critical case. The proof is an adapation of \cite[Section 4]{CH19} and in particular \cite[Lemma 3]{CH19}, which shows that no car coming from far away in $ \mathcal{T}_n$ contributes to $ \vphi ( \mathcal{T}_n)$.
\endproof

\section{Large deviations for the parking process}\label{SecLD}

\subsection{Supercritical parking process}

We are now able to prove Theorem \ref{GrandDev} in the supercritical case, i.e~when $\Theta <0$. 

\begin{proof}[Proof of Theorem \ref{GrandDev}, supercritical case] As in the proof of Proposition \ref{LGN}, by conservation of cars, we have
\[ \vphi (\cT_n)  = \sum_{x \in \cT_n } L_x - \sum_{x \in \cT_n} \mathds{1}_{x \text{ is parked}}.\]
Hence, 
\begin{equation*}
\P\left( \left| \vphi (\cT_n) - cn \right| \geq  \eps n \right)  \leq \P\left( \left| \sum_{x \in \cT_n } L_x - \E_{\nu}[m]n \right| \geq \frac{\eps}{2} n \right) 
+ \P\left( \left| \sum_{x \in \cT_n} \mathds{1}_{x\text{ is parked}} - \left( \E_{\nu}[m] - c\right)n \right| \geq \frac{\eps}{2} n  \right).
\end{equation*}

Using the bound \eqref{DevTotCars} for the total number of cars, the first term of the right-hand has exponential decay. We now want to have large deviations for the number of occupied parking spots. Let $M \geq 0$ be ``large enough".
Then
\begin{eqnarray*}
\P\left( \left| \sum_{x \in \cT_n}  \mathds{1}_{x \text{ is parked}} - \left( \E_{\nu}[m] - c\right) n  \right| \geq \frac{\eps}{2}n  \right) \leq \P\left(  \left| \sum_{x \in \cT_n}  \mathds{1}_{|\Top(\cT_n, x)| > M } \right| \geq  \frac{\eps}{4}n  \right)  \\
+ \P\left( \left| \sum_{x \in \cT_n} \mathds{1}_{x \text{ is parked}}  \mathds{1}_{|\Top(\cT_n, x) | \leq M} - \left( \E_{\nu}[m] - c \right) n \right| \geq  \frac{\eps}{4} n  \right).
\end{eqnarray*}

For the first term, we use Proposition \ref{thmloc} with $k=0$: choosing $M \geq 0$ such that $\P(|\Top(\cT^*,u_0)| \leq M ) = \P (|\cT |\leq M) \geq 1 - \varepsilon /8$ and applying the large deviations result of  Proposition \ref{thmloc} on the finitely many trees $\mathbf{t}$ such that $ | \mathbf{t}| \leq M$, we get $\delta_2>0$ and $n_2 \geq 0$ such that for all $n \geq n_2$, 
\begin{eqnarray*}
\P\left(  \sum_{x \in \cT_n}  \mathds{1}_{|\Top(\cT_n, x)| \leq M } <  \left(1 - \frac{\eps}{4}\right)n  \right) & \leq& \P\left( \left|  \sum_{x \in \cT_n}  \mathds{1}_{|\Top(\cT_n, x)| \leq M }  - n \P (|\cT | \leq M) \right| >  \frac{\eps}{8} n  \right)
  \leq e^{-\delta_2 n }.
\end{eqnarray*}

By Theorem \ref{DegDep}, the probability $\P ( \diameter \text{ is parked in } \cT)$ is $\E_{\nu}[m] - c$. We therefore can choose $M \geq 0$ such that 
$$ \P ( \diameter \text{ is parked in } \cT,  |\cT| \leq M  \text{ and }  \forall x \in \mathcal{T}, L_{x} \leq M) \geq \E_{\nu}[m] - c - \varepsilon /8.$$
Then using an easy extension of Proposition \ref{thmloc} with car arrivals decoration, we obtain
\begin{align*} &\P\left( \left| \sum_{k = 0}^n \mathds{1}_{x \text{ is parked}}  \mathds{1}_{|\Top(\cT_n, x) | \leq M} - \left( \E_{\nu}[m] - c \right) n \right| \geq  \frac{\eps}{4} n  \right) \\ 
&\leq  \P\left( \left| \sum_{k = 0}^n \mathds{1}_{x \text{ is parked}}  \mathds{1}_{|\Top(\cT_n, x) | \leq M}  -   \P ( \diameter \text{ is parked in } \cT \text{ and } |\cT| \geq M ) n \right| \geq  \frac{\eps}{8} n  \right) \leq  e^{- \delta_3 n }.
  \end{align*}
for some $ \delta_3 >0$ and for $n$ large enough. We get the desired result by combining theses inequalities. 
\end{proof}

\subsection{Subcritical parking process}

We now prove Theorem \ref{GrandDev} in the subcritical case, i.e~when $\Theta >0$. Our strategy of proof is very different from the supercritical case and requires analytical and geometric arguments.
Let $X$ be the number of cars that visit the root of $\cT$ and $W$ its generating function, i.e.
\begin{equation*} \label{FuncGen}
W(z) = \sum_{k=0}^{+ \infty} z^k \P ( k\text{ cars visit the root})
\end{equation*}
Since $\vphi (\cT) = (X - 1)_{+} = \sup (X-1,0) $, it suffices to show that $X$ has an exponential tail. More precisely, we will show that $W$ is a convergent series and has radius of convergence strictly larger than $1$ so that the probability $\P( k\text{ cars visit the root})$ has exponential decay.
Using the branching property at the root of $\cT$ we see that $X$ is a solution to the following recursive distributional equation: 
\begin{equation}\label{DE}\tag{DE}
X  \stackrel{(d)}{=} \sum_{i=1}^{N} (X_i - 1)_{+} + P_{N},
\end{equation}
where $N \sim \nu$, the $P_k$ have law $ \mu_{(k)}$ for $k \geq 0$,  the $X_i$ for $i \geq 0$ are i.i.d.~copies of the variable $X$ and all variables on the right-hand side are independent. Therefore, $W$ satisfies the following equation at least in terms of formal power series:
\begin{equation}\label{EQ}\tag{EQ}
W(z) = \sum_{k\geq 0 } \nu_k A_k (z) \left( \frac{1}{z}(W(z) - p_0) + p_0  \right)^k,
\end{equation}
where $A_k$ is the generating function of $P_k \sim \mu_{(k)}$ and  $p_0 = \P \left( \diameter \text{ is parked in } \cT \right)$. This equation on $W$ was used in \cite{GP19} and \cite{CG19} in the case of Poisson Galton--Watson trees and geometric or Poisson arrivals of cars. In these cases, they gave an explicit solution for $W$ in the subcritical case. Notice in passing that \eqref{EQ} or equivalently \eqref{DE} characterizes the law of $X$ and in particular, the quantity $p_0$ which appears in \eqref{EQ} is determined by \eqref{EQ}. In the subcritical case however, we already proved $p_0 = 1- \E_{\nu}[m]$ in Proposition \ref{probaroot} by probabilistic means. Plugging this into \eqref{EQ} we deduce that $z \mapsto W(z)$ is solution of $F(z-1,W(z)-1)=0$ where
\begin{eqnarray*}
F(x,y) = \sum_{k\geq 0 } \nu_k A_k (1+x) \left( \frac{1}{1+x}(y+1 - p_0) + p_0  \right)^k - y -1 = \sum_{i,j \geq 0} a_{i,j} x^{i} y^{j},
\end{eqnarray*}
for some family $(a_{i,j})_{i,j \geq 0}$, which is analytic around $(0,0)$ in both variables by the assumption \eqref{Hexp}.
When $z = 0$ and $W(0) = p_0$, we have
\[ \partial_{y} ((1+x)  \times F) (-1  , p_0 -1))= \sum_{k \geq 0} k\nu_{k} A_{k} (0) p_0^k  \neq 0,\]
and so by the analytic implicit function theorem, $z \mapsto W(z)$ is locally well defined around $z=0$ as the unique solution of $F(z-1,w(z)-1)\times z = 0$ and $w(0) =p_0$.
In fact, since $W$ is a generating function, its radius of convergence is at least $1$ and we have $W(1) = 1$. The problem is that $z = 1$ and $W(1)=1$ may be a singularity for the equation: indeed, we have 
\begin{align*} 
\partial_y F(x,y)&= -1+ \sum_{k \geq 0} k \nu_{k} \frac{ A_{k} (1+x)}{1+x} \left( \frac{y+1-p_0}{1+x} +p_0  \right)^{k-1},   \\
\partial_x F(x,y)&= \sum_{k \geq 0} \nu_{k} A_{k}' (1+x) \left( \frac{y+1-p_0}{1+x} +p_0  \right)^{k} - k \nu_{k} A_{k} (1+x)\frac{y+1 - p_0 }{(1+x)^2} \left( \frac{y+1-p_0}{1+x} +p_0  \right)^{k-1}.
\end{align*}
Since $A_k$ is the generating function of $ \mu_{(k)}$, we have $A_k(1) = 1$ and $A_k'(1) = m_{(k)}$. Using the fact that $p_0 = 1- \E_{\nu}[m]$ (Proposition \ref{probaroot}), we obtain  $\partial_x F(0,0) = 0$ and $\partial_y F(0,0) = 0$, so one cannot use the implicit function theorem at this point to extend $W(z)$. In the subcritical case we will resolve this singularity and prove that $F(z,w)$ has two analytic branches at $(0,0)$.  

\begin{lemma} \label{lem:branch} The origin is a double point of the section defined by $F (x,y) = 0$: that means that in a neighborhood of $(0,0)$, the equation $F(x,y(x)) = 0$ has two analytic branches.
\end{lemma}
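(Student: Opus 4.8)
The plan is to show that the origin is an \emph{ordinary} double point (a node) of the analytic curve $\{F=0\}$, i.e.\ that the degree-two part of the Taylor expansion of $F$ at $(0,0)$ is a non-degenerate binary quadratic form, and then to separate the two branches by a single blow-up. We already know that $F$ is analytic near $(0,0)$ with $F(0,0)=0$ and, using $p_0 = 1-\E_\nu[m]$, that $\partial_x F(0,0)=\partial_y F(0,0)=0$; hence
\[ F(x,y) = a_{2,0}x^2 + a_{1,1}xy + a_{0,2}y^2 + R(x,y), \]
with $R$ analytic and of total degree $\geq 3$. The first task is therefore to compute the three coefficients $a_{2,0},a_{1,1},a_{0,2}$.

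To do this I would write $F(x,y) = H(x,g(x,y)) - y - 1$ with $H(x,w)=\sum_k \nu_k A_k(1+x)w^k$ and $g(x,y)=\tfrac{y+1-p_0}{1+x}+p_0$, so that $g(0,0)=1$, and differentiate this composition twice at $(0,0)$, using $A_k(1)=1$, $A_k'(1)=m_{(k)}$, $A_k''(1)=\sigma_{(k)}^2+m_{(k)}^2-m_{(k)}$, together with $\sum_k\nu_k=\sum_k k\nu_k=1$ and $\sum_k k(k-1)\nu_k=\Sigma^2$. This yields
\[ a_{0,2}=\tfrac12\Sigma^2,\qquad a_{1,1}=\E_{\overline{\nu}}[m]-\Sigma^2\E_\nu[m]-1,\qquad a_{2,0}=\tfrac12\bigl(\E_\nu[\sigma^2+m^2-m]-2\E_{\overline{\nu}}[m]\E_\nu[m]+\Sigma^2\E_\nu[m]^2+2\E_\nu[m]\bigr). \]
Since $\nu\neq\delta_1$ has mean $1$ we have $\Sigma^2>0$, so $a_{0,2}\neq 0$. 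The crucial point is then the algebraic identity
\[ a_{1,1}^2-4a_{0,2}a_{2,0} = (1-\E_{\overline{\nu}}[m])^2-\Sigma^2\E_\nu[\sigma^2+m^2-m] = \Theta, \]
i.e.\ the discriminant of the quadratic part of $F$ is exactly the phase-transition parameter $\Theta$ of Theorem~\ref{DegDep} (it is, unsurprisingly, the value at $t=1$ of the quadratic governing $t_{\max}$ in Proposition~\ref{meanflux}). Since we are in the subcritical regime $\Theta>0$, the quadratic part factors over $\R$ as $a_{0,2}(y-v_1 x)(y-v_2 x)$ with $v_1\neq v_2$ real.

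With non-degeneracy in hand I would separate the branches via the substitution $y=xv$. Because every monomial of $R$ has total degree $\geq 3$, one has $R(x,xv)=x^3\widetilde R(x,v)$ with $\widetilde R$ analytic, hence $F(x,xv)=x^2 G(x,v)$ with $G(x,v)=a_{0,2}v^2+a_{1,1}v+a_{2,0}+x\widetilde R(x,v)$ analytic near each $(0,v_i)$, $G(0,v_i)=0$, and $\partial_v G(0,v_i)=2a_{0,2}v_i+a_{1,1}\neq 0$ since $v_i$ is a simple root. The analytic implicit function theorem then provides analytic functions $h_1,h_2$ near $0$ with $h_i(0)=v_i$ and $G(x,h_i(x))\equiv 0$, so that $y_i(x):=x\,h_i(x)$ are analytic, solve $F(x,y_i(x))=0$, and satisfy $y_i'(0)=v_i$; as $v_1\neq v_2$ these are two distinct analytic branches. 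To see there are no others, note that $F(0,y)=a_{0,2}y^2+O(y^3)$ vanishes to order exactly $2$ in $y$, so Weierstrass preparation gives $F=U(x,y)\bigl(y^2+c_1(x)y+c_0(x)\bigr)$ near $(0,0)$ with $U$ a unit and $c_0(0)=c_1(0)=0$; the zero set of $F$ near the origin is that of a monic quadratic in $y$, which must therefore equal $(y-y_1(x))(y-y_2(x))$.

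The only genuinely delicate step is the first: differentiating the composition $H\circ(\mathrm{id},g)$ twice and verifying that, once $p_0=1-\E_\nu[m]$ is substituted, the discriminant collapses precisely to $\Theta$ — everything afterwards is a routine application of the implicit function theorem and of Weierstrass preparation. (An alternative to the blow-up in the last paragraph is to run Weierstrass preparation first and observe that the discriminant $c_1(x)^2-4c_0(x)$ of the resulting quadratic equals $x^2$ times an analytic function taking the value $(v_1-v_2)^2>0$ at $x=0$; its square root, and hence the two roots $\tfrac12\bigl(-c_1(x)\pm\sqrt{c_1(x)^2-4c_0(x)}\bigr)$, are then analytic. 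This again reduces to the same coefficient computation.)
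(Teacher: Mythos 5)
Your proposal is correct, and the crucial computation is the same as the paper's: both hinge on the fact that after substituting $p_0 = 1-\E_\nu[m]$, the quadratic part of $F$ at the origin has discriminant exactly $\Theta$, which is strictly positive in the subcritical regime. (Your expressions for $a_{2,0}$, $a_{1,1}$, $a_{0,2}$ are the correct ones; the formulas displayed in the paper's proof appear to contain typographical slips, replacing $\E_\nu[m]$ by $\E_{\overline\nu}[m]$ in a couple of spots, but the substance $a_{1,1}^2-4a_{0,2}a_{2,0}=\Theta$ and $c_\pm = (-a_{1,1}\pm\sqrt\Theta)/\Sigma^2$ is what both arguments rely on.) From that point on, however, the routes diverge. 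The paper runs the Newton--Puiseux algorithm explicitly: it posits $y(x)=cx^a+o(x^a)$, uses the cancellation of leading terms to force $a=1$ and $c\in\{c_+,c_-\}$, and then proceeds inductively to show that all subsequent Puiseux exponents are integers and each $c_k$ is determined, i.e.\ it builds the two power series coefficient by coefficient via Newton's polygon. You instead recognize the origin as an \emph{ordinary} double point (a node), separate the branches by the single blow-up $y=xv$ followed by the implicit function theorem at the two simple roots $v_1\neq v_2$, and then use Weierstrass preparation to cap the number of branches at two. Your version is shorter and more structural, makes the role of the non-degeneracy of the Hessian (equivalently $\Sigma^2>0$ and $\Theta>0$) completely transparent, and cleanly settles both existence and uniqueness of the two branches; the paper's Newton--Puiseux computation is more hands-on and directly exhibits the recursion for the coefficients $c_k$, at the cost of a somewhat heavier induction and a less explicit uniqueness claim. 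Both are standard, and either suffices to conclude that $W$ is analytic through $z=1$.
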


\begin{figure}[!h]
 \begin{center}
 \includegraphics[width=6.8cm]{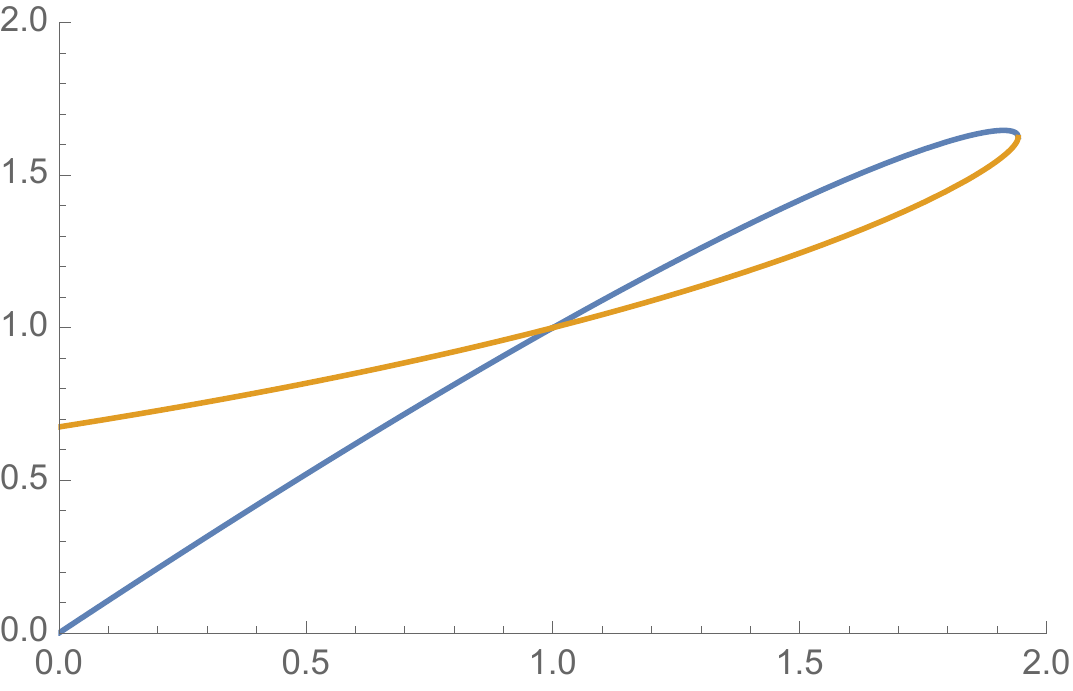}
 \caption{The curve defined by $F(x+1, y+1) = 0$ when the offspring distribution is geometric and the car arrivals are i.i.d~with Poisson distribution of parameter $ \alpha =0.325 \leq \alpha_c = \sqrt{2}- 1$ (see \cite{CG19}). Notice the two analytic branches around $(1,1)$. The orange one is the generating function $W$. } \label{fig:analytic}
 \end{center}
 \end{figure}

\proof We apply Newton's method to determine the Newton--Puiseux expansion at the point $(0,0)$ \cite[p498-500]{FS}. The Newton--Puiseux expansion of a solution $y$ of $F(x,y(x)) = 0$ around $(0,0)$ shows that any solution can be expressed as a locally convergent series of the form
\[ y(x) = \sum_{k \geq k_0} c_k x^{k/d}\]
where $k_0 \in \mathbb{Z}$ and $d$ in an positive integer. Inductively we will show that  $d =1$, $ k_0 \geq 0$ and that we have only two choices for the sequence $(c_k)_{k \geq k_0}$ in our case and so that $F$ has two solutions $y_{\pm}$ which are analytic around $0$. We look for a solution to $ F(x, y(x)) = 0$ of the form $y(x) = c x^a + o (x^a)$ with $c \neq 0$. We expand $F$ up to order $2$ and we obtain 
\begin{eqnarray*}
0 &=&a_{2,0} x^2 +  a_{0,2} c^2 x^{2a} + a_{1,1} c x^{1+a} + o (x^{2a \wedge 2}) \\
&=&\left( \E_{\nu} [m^2 + \sigma^2 - m ] + \E_{\overline{\nu}}[m]^2 ( \Sigma^2 - 2) - 2 \E_{\overline{\nu}}[m]\right) \frac{x^2}{2} + \frac{\Sigma^2 c^2}{2} x^{2a} \\
&~&+ c \left( \E_{\overline{\nu}}[m](1 - \Sigma^2) -1 \right)x^{1+a} + o (x^{2a \wedge 2}) 
\end{eqnarray*}

Since the equation has to be identically satisfied, the main asymptotic should be $0$. This can only happen if two or more of the exponents in  $\{2, 1+a, 2a\}$ coincide and the coefficients of the corresponding monomial in $F$ are zero. We obtain here $a=1$ and $c$ satisfies a quadratic equation that has two different real solutions which are 
\begin{equation}
c_{\pm} = \frac{- \left( \E_{\overline{\nu}}[m](1 - \Sigma^2) -1 \right) \pm \sqrt{\Theta} }{\Sigma^2}
\end{equation}

We choose one of the two solutions for $c_1$ and suppose that we have a solution of the form $y(x) = c_1 x +\cdots + c_{k-1} x^{k-1} + o (x^{k-1})$ and  look for a solution of the form $y(x) = c_1 x +\dots + c_{k-1} x^{k-1} + c_k x^{a}  + o (x^a)$ where $c_k \neq 0$, $k \geq 2$ and $a > k-1$. Expanding $F$ up to order $k+1$, we obtain 
\begin{align*}
0 &= \sum_{1 \leq i, j \leq k+1} a_{i,j} x^i \left(c _1 x +\dots + c_{k-1} x^{k-1} + c_k x^{a}  + o (x^a) \right)^j + o (x^{k+1}) \\
&= c_k \left( c_1 \frac{\Sigma^2}{2} +  \left( \E_{\overline{\nu}}[m](1 - \Sigma^2) -1 \right) \right)x^{1+a} +  x^{k+1} \sum_{1 \leq i, j \leq k+1} a_{i,j} \sum_{\substack{1 \leq l_1, \ldots, l_j \leq k \\ \sum l_n = k+1-i} } c_{l_1} \ldots c_{l_j} + o(x^{(1+a) \wedge (k+1)})
\end{align*}

Since $c_k \neq 0$, then the coefficient of the term $x^{1+a}$ is $c_k c_1 \Sigma^2/ 2+ c_k \left( \E_{\overline{\nu}}[m](1 - \Sigma^2) -1 \right) = \pm \sqrt{\Theta} c_k \neq 0$. In the generic situation, the coefficient of the term $x^{k+1}$ is non-zero and since the main asymptotic of $F$ should be $0$, then $a = k$ and 
\begin{eqnarray*}
c_k = \frac{1}{\sqrt{\Theta}} \sum_{2 \leq i, j \leq k+1} a_{i,j} \sum_{\substack{1 \leq l_1, \ldots, l_j \leq k \\ \sum l_n = k+1-i} } c_{l_1} \ldots c_{l_j} \neq 0 .
\end{eqnarray*}
Otherwise i.e.~when the coefficient of the term $x^{k+1}$ is zero, we expand $F$ up to the next orders until we find a non-zero coefficient. Note that since $a >1$, the term $x^{1+a}$ can only cancel out with a term where $x$ appears at an integer power. 
Therefore, we obtain a Puiseux expansion of $y$, which has integer powers, which concludes the proof. 
\endproof

\begin{proof}[Proof of Theorem \ref{GrandDev}, subcritical case] 
By Lemma \ref{lem:branch}, the equation $F(z-1, w(z) -1)=0$ has two analytic branches around $(z,w) = (1,1)$ (Figure \ref{fig:analytic}). One of these branches coincides with the generating function of the number of cars which visit the root $W$ defined in \eqref{FuncGen} in a neighborhood of $1^{-}$. Therefore, $z = 1$ is not a singularity of $W$. Moreover since $W$ is a generating function, its power expansion around $z=0$ has non-negative coefficients. Hence, by Pringhsheim's theorem \cite[Theorem IV.6 p240]{FS}, the radius of convergence of $W$ around $0$ is greater than $1$ and the desired result follows.
\end{proof}

\section{Application to the size of the connected components}

In this section we want to study the size of the connected components in the final configuration i.e.~the clusters of vertices that contain a car after the parking procedure and prove Corollary \ref{CorGeom}. We say that $x \in \mathbf{t}$ is free (resp. parked) if it contains (resp. does not contain) a car after parking.
\subsection{Supercritical Case : Giant component}

In this section we suppose that we are in the supercritical case i.e.~$ \Theta <0$ and prove Corollary \ref{CorGeom} in this phase.
We first prove that the second largest connected component is of size $O(\ln(n))$. This can be easily deduced from the following proposition.
\begin{prop}\label{LGsuper}When $ \Theta <0$, there exists $A_0\geq 0$, such that for all $A > A_0$,
\begin{eqnarray*} 
\P ( \exists x \in \cT_n \text{ s.t. }  | \Top( \cT_n , x)| \geq A \ln (n) \text{ and $x$ is a free spot}) \xrightarrow[n\to\infty]{}0  
\end{eqnarray*}
\end{prop}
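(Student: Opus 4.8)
The plan is to show that if some vertex $x \in \cT_n$ is free and its fringe subtree $\Top(\cT_n,x)$ is large, then that fringe subtree is itself an instance of a Galton--Watson tree on which the parking procedure leaves the root free, and this is an exponentially rare event in the size of the subtree; summing over the polynomially many possible sizes $\geq A\ln n$ then gives a bound that tends to $0$. First I would use the \Lukasiewicz-walk coupling from Section \ref{SecFringe}: conditionally on $\{T=n\}$, the fringe subtree $\Top(\cT_n,v_j)$ rooted at the $j$-th vertex corresponds to an excursion of the walk $S$ above its running infimum, decorated with the i.i.d.\ car arrivals. Crucially, whether $v_j$ is free depends only on the parking procedure \emph{inside} $\Top(\cT_n,v_j)$ (cars entering from outside only help other spots, never free one up — here one should invoke the monotonicity/Abelian property: adding cars can only decrease the number of free spots), so $\{v_j \text{ free}\} \cap \{|\Top(\cT_n,v_j)| = m\}$ is, up to the conditioning on $\{T=n\}$, exactly the event that an unconditioned decorated $\nu$-Galton--Watson tree has $m$ vertices and its root is free after parking.

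The key quantitative input is the exponential tail for the flux of the unconditioned tree $\cT$ in the subcritical regime — but here we are in the \emph{supercritical} regime, so I instead need the analogous statement for the root being free. Note $\{\diameter \text{ free in } \cT\}$ is contained in $\{\vphi(\cT)\geq 1\} \cup \{\text{no car ever visits }\diameter\}$; more usefully, on a large tree the root being free forces an atypically small number of arriving cars or an atypically large flux. The cleanest route: write, as in the proof of Proposition \ref{LGN}, $\sum_{x\in\cT}\one_{x\text{ free}} = \sum_{x\in\cT} L_x - \vphi(\cT)$, and combine the large-deviation bound \eqref{DevTotCars} for $\sum L_x$ with a large-deviation \emph{upper} bound on $\vphi(\cT)$ restricted to $\{|\cT|=m\}$. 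Actually the most robust approach is to bound $\P(\diameter\text{ free in }\cT,\ |\cT|\geq A\ln n)$ directly: since $\E_\nu[m]>1$ in the supercritical case (a consequence of Theorem \ref{DegDep} noted in the text), on $\{|\cT|=m\}$ the total number of arrivals $\sum_{x}L_x$ is typically $\E_\nu[m]\,m > m$, and the root being free means in particular that one fewer spot is occupied; iterating along the spine to the root as in Propositions \ref{meanflux}--\ref{probaroot}, the root being free has probability bounded by the probability that the associated random walk $W^{(1)}_{(k)}$ (positive drift) stays nonnegative forever when started low, which decays exponentially. Concretely, $\P(\diameter\text{ free in }\cT,\ |\cT| = m) \leq e^{-\delta_0 m}$ for some $\delta_0>0$, using \eqref{Hexp} and the strict negativity of $-\E[1-Z^{(1)}]$.

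Granting that, I would conclude by a union bound:
\[
\P\big(\exists x\in\cT_n:\ |\Top(\cT_n,x)|\geq A\ln n,\ x\text{ free}\big)
\leq \frac{1}{\P(T=n)}\sum_{m\geq A\ln n}\ \E\Big[\#\{j: |\Top(\cT_n,v_j)|=m\}\ \text{and }v_j\text{ free}\Big].
\]
Each term is at most $n\cdot\P(|\cT|=m,\ \diameter\text{ free})\leq n\,e^{-\delta_0 m}$, and using $\P(T=n)\geq C' n^{-3/2}$ from \eqref{PSize}, the whole sum is bounded by $C'' n^{5/2}\sum_{m\geq A\ln n}e^{-\delta_0 m} \asymp n^{5/2} e^{-\delta_0 A\ln n} = n^{5/2 - \delta_0 A}$, which tends to $0$ as soon as $A > A_0 := 5/(2\delta_0)$. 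The main obstacle I anticipate is making rigorous the reduction ``$v_j$ free depends only on the subtree $\Top(\cT_n,v_j)$'': one must carefully use the Abelian property together with the fact that cars can only enter a subtree through its root and, once the subtree's own cars have parked (which we may do first, by abelianity), any further incoming car either parks at a still-empty spot of the subtree or passes through — so the set of free spots of $\Top(\cT_n,v_j)$ computed in isolation \emph{contains} the set of free spots computed inside $\cT_n$, giving the one-sided domination that is all we need. The exponential bound $\P(|\cT|=m,\ \diameter\text{ free})\leq e^{-\delta_0 m}$ is a straightforward adaptation of the random-walk arguments already used for Proposition \ref{probaroot}, now exploiting the \emph{positive} drift of $W^{(1)}$, and I would treat it as routine.
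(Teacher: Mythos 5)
Your overall architecture matches the paper's: a union bound over vertices, conditioning on the size $m$ of the fringe subtree via the identity \eqref{EqCond}, and then an exponential tail in $m$ for the event that the root of $\cT_m$ is free, giving a bound of the form $n^{\mathrm{const} - \delta A}\to 0$ for $A$ large. The different prefactor ($n^{5/2}$ versus the paper's $n$) comes only from you inserting $\P(T=n)^{-1}$ where the paper keeps the computation inside the conditioned measure and then sums $\P(|\Top(\cT_n,v_k)|=s)$ to $1$; either works.

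The genuine gap is in the key estimate $\P(\diameter \text{ free in } \cT_m)\leq e^{-\delta m}$. The paper obtains it from the simple but crucial observation that \emph{if the root is free, then no car ever visits the root, hence $\vphi(\cT_m)=0$}; in the supercritical regime Theorem~\ref{GrandDev} (already proved) says $\vphi(\cT_m)$ concentrates around $cm>0$ with exponential LD, so $\{\vphi(\cT_m)=0\}$ has probability at most $e^{-\delta m}$. You never state this implication; worse, you assert its opposite (``the root being free forces ... an atypically \emph{large} flux''), and your containment $\{\diameter\text{ free}\}\subseteq\{\vphi\geq 1\}\cup\{\text{no car visits }\diameter\}$ is vacuously true because the left side always equals the second set. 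Both alternative routes you sketch to replace this step are shaky: (a) the ``conservation of cars plus $\E_\nu[m]>1$'' route relies on $\E_\nu[m]>1$ being forced by supercriticality, but the text only proves the converse implication ($\E_\nu[m]>1$ implies supercritical), not that supercriticality implies $\E_\nu[m]>1$; (b) the spine-random-walk argument from Propositions~\ref{meanflux}--\ref{probaroot} computes quantities for the \emph{unconditioned} $\cT$, and adapting it to $\cT$ conditioned on $|\cT|=m$ to get decay in $m$ is far from the ``routine'' you claim, nor is it needed once one has Theorem~\ref{GrandDev}.

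Finally, your worry about the Abelian/monotonicity reduction is a red herring: in this model cars only move \emph{toward} the root of $\cT_n$, so a car arriving outside $\Top(\cT_n,v_j)$ can never enter that subtree. The parking configuration inside a fringe subtree, and in particular whether $v_j$ is free, is therefore determined \emph{exactly} (not just dominated) by the structure and arrivals inside $\Top(\cT_n,v_j)$; no one-sided comparison argument is required.
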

\begin{proof} We let $v_k$ be the $k$th vertex in the depth-first exploration of $\cT_n$ for $0 \leq k \leq n-1$. We use the fact that conditionally on $|\Top (\cT_n, v_k) |= N$, the tree $\Top (\cT_n , v_k)$ has law $\cT_N$: for every fixed tree $ \mathbf{t}$,
\begin{eqnarray} \label{EqCond} 
\P \Big( \Top (\cT_n , v_k) = \mathbf{t} \Big| |\Top (\cT_n, v_k) |= N \Big) = \P ( \mathcal{T}_N = \mathbf{t}) .
\end{eqnarray}
We first bound the probability in the proposition by the expectation of the number of such vertices $x$.Therefore, the probability in the proposition satisfies 
\begin{align*} 
&\P ( \exists x \in \cT_n  \text{ s.t. } |\Top( \cT_n , x)| \geq A \ln (n) \text{ and } x \text{ is a free spot}) \\
 &\underset{}{\leq} \sum_{k=1}^n \E \left[ \mathds{1}_{|\Top( \cT_n , v_k)| \geq A \ln (n) \text{ and $v_k$ is a free spot}}\right] \\
 &\underset{ \eqref{EqCond}}{\leq}  \sum_{k=1}^n \sum_{s \geq A \ln (n)} \E \Big[ \mathds{1}_{v_k\text{ is a free spot}} \Big| |\Top( \cT_n , v_k)| = s \Big] \P (|\Top( \cT_n , v_k)| =s )  \\
 &\underset{ }{\leq}  n \underset{s \geq A \ln(n)}{\sup}\P ( \diameter \text{ is a free spot in }\cT_s) \\
 &\underset{\text{Thm } \ref{GrandDev}}{\leq} n \times e^{- \delta A \ln(n)}, 
\end{align*}
where $\delta$ is independent of $A$ (and $n$). Therefore, if $A \geq 2 / \delta$, this quantity converges to $0$ as $n$ goes to $\infty$.
\end{proof}

Recall that $|C_{\max}(n)|$ is the size of the largest parked connected component of $\cT_n$. Recall Aldous' sin-tree $ \mathcal{T}^* $ from Section \ref{SecFringe} with spine $\{ u_0, u_1, \ldots \}$. \begin{prop} We have the following convergence 
\begin{align*}
\dfrac{|C_{\max}(n)|}{n} \underset{n \to \infty}{\xrightarrow{(\P)}} \P \left( \forall k \geq 0, u_k \text{ is parked in } \cT^{*} \right),
\end{align*}
where the probability on the righthand side is computed by imagining that we perform the parking on $\cT^*$ (rooted at infinity)  with the same rules for car arrivals as for $ \mathcal{T}$.
\end{prop}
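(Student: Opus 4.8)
The strategy is to relate the largest parked cluster in $\cT_n$ to the cluster of the spine in Aldous' sin-tree $\cT^*$, via a first-moment/second-moment argument for the number of vertices lying in a large parked component. For a vertex $x\in\cT_n$, let $C(x)$ denote its parked connected component (the maximal connected set of parked vertices containing $x$, if $x$ is parked, and $\emptyset$ otherwise), and write $N_{\geq K}(n) = \sum_{x\in\cT_n}\one_{|C(x)|\geq K}$ for the number of vertices in components of size at least $K$. The first step is to observe that, because the parking configuration on $\Top(\cT_n,x)$ is determined by $\Top(\cT_n,x)$ together with its car decoration (Abelian property), whether $x$ lies in a large parked component that does \emph{not} reach the root of $\cT_n$ is a local event around $x$; and by Proposition \ref{LGsuper}, with probability tending to $1$ there is a unique parked component of size $\gg \ln n$, namely the one containing the root. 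Thus $|C_{\max}(n)| = |C(\diameter)|$ with high probability, and it remains to compute $|C(\diameter)|/n$.

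\medskip

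The second and main step is the first-moment computation: $\E\big[|C(\diameter)| \,\big|\, T=n\big] = \sum_{k=1}^{n}\P\big(v_k \text{ is parked and connected to } \diameter \text{ in } \cT_n\big)$. For a typical vertex $v_k$ (say $k=\mathfrak u_n$ uniform), the local limit of $\cT_n$ around $v_k$ is $\cT^*$ with spine $(u_0,u_1,\ldots)$, and $v_k$ is parked and connected to the root exactly when every ancestor $u_0,u_1,\ldots$ up to the root is parked — i.e.\ the whole spine is parked in $\cT^*$. One has to upgrade this convergence of probabilities to convergence of the normalized sum $\frac1n\sum_k \one_{v_k\in C(\diameter)}$; here the key point is that the event $\{v_k \in C(\diameter)\}$ can be approximated by the \emph{truncated} event $\{$the first $M$ ancestors of $v_k$ are parked and $\Top(\cT_n, (v_k)_M)$ has size $\leq M\}$, which is a local event, plus an error controlled by $\P(\exists$ large parked component not containing the root$)$ from Proposition \ref{LGsuper} and by $\P(|\cT|>M)$. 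Applying the fringe-neighbourhood law of large numbers (the in-probability version, either Proposition \ref{thmloc} with $k$ ranging up to $M$, or directly Janson/Aldous), the truncated sum concentrates at $\P(u_0,\ldots,u_M$ parked in $\cT^*$, and $\Top(\cT^*,u_M)$ finite of size $\leq M)$, which decreases to $\P(\forall k\geq 0,\ u_k$ parked in $\cT^*)$ as $M\to\infty$. This gives the convergence in probability of $|C(\diameter)|/n$ to the claimed limit.

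\medskip

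The third step is to argue that this limiting constant lies in $(0,1)$ — strictly positive because in the supercritical regime $\Theta<0$ Proposition \ref{probaroot} gives $p_0<\E_\nu[m]\le 1$, so there is positive probability for the spine to be parked arbitrarily far (this is exactly the branching-process survival that drives the giant component; one can make it rigorous by a Kesten-tree comparison, since $\P(u_0,\ldots,u_M \text{ parked})$ stays bounded below uniformly in $M$ by supercriticality of the associated spinal recursion), and strictly less than $1$ because a positive density of vertices are free (the flux plus the free spots account for a positive fraction, e.g.\ any leaf receiving no car whose parent is free).

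\medskip

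\textbf{Main obstacle.} The delicate part is the passage from pointwise convergence of $\P(v_k \in C(\diameter))$ to convergence of the spatial average $\frac1n\sum_k\one_{v_k\in C(\diameter)}$, because membership in $C(\diameter)$ is a \emph{global} (non-local) event: it depends on the whole ancestral line of $v_k$ up to the true root, which can be of length $\Theta(\sqrt n)$. The resolution is the truncation described above — replace ``connected to the actual root'' by ``the first $M$ ancestors are parked and the $M$-th ancestor spawns only a small subtree'' — and then control the discrepancy uniformly: on the good event of Proposition \ref{LGsuper} every parked component other than $C(\diameter)$ has size $O(\ln n)$, so a vertex whose first $M$ ancestors are all parked but which is \emph{not} in $C(\diameter)$ must either lie within $O(\ln n)$ of a place where the chain of parked ancestors breaks, or have its $M$-th ancestor carry a large subtree; both contribute $o(n)$ vertices with high probability once $M$ is large. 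Making these error bounds quantitative, and checking that the truncated indicator is genuinely a function of a bounded neighbourhood so that Proposition \ref{thmloc} applies, is where the real work lies.
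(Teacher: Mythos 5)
Your overall plan matches the paper's: reduce $|C_{\max}(n)|$ to $|C(\diameter)|$ via Proposition~\ref{LGsuper}, then compute $|C(\diameter)|/n$ by expressing membership in the root's component as a ``local'' condition on ancestors and invoking a fringe law of large numbers. However, your chosen truncation at a \emph{fixed} radius $M$, followed by $M\to\infty$, leaves a real gap in the lower bound. The fixed-$M$ indicator $\one\{\text{first $M$ ancestors parked}\}$ overcounts: it includes vertices whose chain of parked ancestors breaks somewhere beyond distance $M$, and these ``type-2'' errors have, in the local limit, density exactly $\P(u_1,\ldots,u_{M-1}\text{ parked})-\P(\forall k,\,u_k\text{ parked})$, which is $\Theta(1)$ for fixed $M$. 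Showing that this quantity is what the error count $/n$ converges to (so that it vanishes only after $M\to\infty$) is precisely the non-local statement you need, and it cannot be extracted from Proposition~\ref{thmloc} or the Aldous/Janson fixed-radius fringe theorems: the type-2 event depends on the ancestral line out to distance up to $A\ln n$, and your proposed resolution (``both contribute $o(n)$ once $M$ is large'') is asserting rather than proving the double-limit interchange. The paper sidesteps this by truncating at radius $A\ln n$ rather than at a fixed $M$: on the good event of Proposition~\ref{LGsuper} the type-2 set (vertices whose nearest free ancestor lies at distance $\geq A\ln n$) is \emph{exactly empty}, so the count $|C(\diameter)|=n-|\{x:\text{nearest free ancestor at dist}<A\ln n\}|$ holds identically w.h.p., and then one applies Stufler's theorem \cite[Theorem~5.2]{St19}, a law of large numbers for fringe statistics at a growing radius $o(\sqrt n)$, to get the limit. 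That growing-radius fringe theorem is the key technical ingredient missing from your argument; with only bounded-radius tools your type-2 control would instead require a separate moment estimate (e.g.\ bounding $\E\big[\sum_{y\text{ free}}|\{v\in\Top(\cT_n,y):\text{depth}\geq M\}|\big]$ using the exponential tail of $\P(\diameter\text{ free in }\cT_s)$ from Theorem~\ref{GrandDev}), which is considerably more work.

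Two minor points: the truncation condition ``$|\Top(\cT_n,(v_k)_M)|\leq M$'' is vacuous, since a vertex with a descendant at distance $M$ necessarily has a subtree of size at least $M+1$; you presumably mean $\leq K$ for some $K$ chosen large relative to $M$. Also, your third step (showing the limiting constant lies in $(0,1)$) is not part of the proposition being proved here; the paper addresses this only implicitly via Corollary~\ref{CorGeom}.
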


\begin{proof} We chose $A > A_0$ and we work on the complement event of that of Proposition \ref{LGsuper} i.e.~on $ \mathcal{E}_n = \{ \forall x \in \cT_n \text{ s.t. }  | \Top( \cT_n , x)| \geq A \ln (n), \text{$x$ is parked}\}$. Then, when $n$ is large enough, the root is parked and its parked component contains all vertices $x$ such that $| \Top( \cT_n , x)| \geq A \ln (n)$ and is therefore with high probability the only parked component of size larger than $A \ln (n)$. Hence, we can decompose the vertices of $\cT_n$ according to whether they have an ancestor which is a free parking spot and how far this ancestor is: on $ \mathcal{E}_n$, when $n$ is large enough and with high probability, 
\begin{eqnarray*} 
|C_{\max}(n) | 
 &=& n -  |\{ x \in \cT_n \text{ s.t. } x \text{ has an ancestor at distance  $ < A\ln(n)$, which is a free spot} \}| \\ 
&&- |\{ x \in \cT_n \text{ s.t. }x \text{ has an ancestor at distance  $ \geq A\ln(n)$, which is a free spot} \}|.
\end{eqnarray*}

Since $A \geq A_0$ as defined in Proposition \ref{LGsuper}, then on $ \mathcal{E}_n$ i.e.~with high probability, 
\begin{align*}
|\{ x \in \cT_n, x \text{ has an ancestor at distance  $ \geq A\ln(n)$, which is a free spot } \}|  = 0.
\end{align*}

Moreover, using \cite[Theorem 5.2]{St19} and its easy extension that takes care of the car decoration, since $\ln(n) = o (\sqrt{n})$, we deduce that
\begin{align*}
\Big| \frac{1}{n}|\{ x \in \cT_n \text{ s.t. }x \text{ has an ancestor at distance  $ < A\ln(n)$, which is a free spot } \}| \\
- \mathbb{P} \left( \forall 0 \leq k \leq A \ln (n) , u_k \text{ is parked in } \cT^{*} \right)\Big|  \underset{n \to \infty}{\xrightarrow{(\P)}} 0.
\end{align*}
As the probability $ \mathbb{P} ( \mathcal{E}_n)$ converges to  $0 $ and  $\mathbb{P} \left( \forall 0 \leq k \leq A \ln (n) , u_k \text{ is parked in } \cT^{*} \right)$ converges to $\mathbb{P} \left( \forall  k \geq 0 , u_k \text{ is parked in } \cT^{*} \right)$ as $n$ goes to $\infty$, we get the desired result. 
\end{proof}

\subsection{Subcritical parking}

In this section we suppose that we are in the subcritical case i.e.~$ \Theta >0$ and prove Corollary \ref{CorGeom} in this case.

\begin{proof}[Proof of Corollary \ref{CorGeom}]
The proof is based on the sprinkling method which consists in adding cars while staying in the subcritical phase. Since $\Theta > 0$, there exists $\eps >0$ such that 
\begin{align*}
\Theta' = \Theta + \eps^2 - 2 \eps (1- \mathbb{E}_{ \overline{ \nu}}[m] - \Sigma^2 \E_{ \nu} [m ] ) > 0,
\end{align*}
This means that the parking process on $\cT_n$ or $\cT$ with offspring distribution $\nu$ and car arrivals with distribution $\tilde{\mu}_{(k)}^{\eps}$ such that $\tilde{\mu}_{(k)}^{\eps}(j) = (1- \eps)\mu_{(k)}(j) + \eps \mu_{(k)} (j-1)$ (that is we add a car with probability $\eps$ on each vertex independently) is still subcritical. We denote by $\tvphi$ the corresponding flux. Recall that $\vphi_x( \mathbf{t})$ is the flux at vertex $x$ i.e.~on $\Top( \mathbf{t}, x)$.
Imagine that in $ \mathcal{T}_n$ with arrivals $ \mu$, we have a large parked component $ C$ of size larger than $A \ln(n)$. If we further let cars arrive on each vertex with probability $ \varepsilon$, then by the law of large numbers, the root $x$ of $C$ gets a flux $\tvphi_x( \mathcal{T}_n) \geq A \varepsilon \ln (n) /2$ with probability at least $1/2$ when $n$ is large enough. Therefore, for $n$ large enough, 
\begin{align*}
\P \left( |C_{\max}(n)| \geq A \ln (n))\right)  &\leq 2 \P \left( \exists x \in \cT_n, \tvphi_{x} (\cT_n ) \geq \frac{\eps A }{2} \ln (n )\right) \\
&\leq 2 \sum_{M =1}^{n}  \sum_{k =1}^n \P \Big( \tvphi_{v_k} (\cT_n )\geq \frac{\eps A }{2} \ln (n ) \Big| |\Top(\cT_n, v_k)| = M\Big) \P \left( |\Top(\cT_n, v_k)| = M \right).
\end{align*}
Now we use the conditional law \eqref{EqCond}, the asymptotic \eqref{PSize} and Theorem \ref{GrandDev} so that
\begin{eqnarray*}
\P \left( |C_{\max}(n)| \geq A \ln (n)\right)  &\underset{ \eqref{EqCond}}{\leq}& 2\sum_{M = 1}^{n} \sum_{k =1}^n  \P \left( |\Top(\cT_n, v_k)| = M\right) \frac{ \P \left(\tvphi(\cT) \geq \frac{\eps A }{2} \ln (n )\right)}{ \P \left( |\cT| = M\right) } \\
&\underset{ \eqref{PSize}}{\leq} & 2 C \sum_{M \geq 1} \sum_{k =1}^n  \P \left( |\Top(\cT_n, v_k)| = M \right) \times n^{3/2} \times \P \left(\tvphi_{} (\cT )\geq \frac{\eps A }{2} \ln (n ) \right)\\
&\underset{\text{Thm } \ref{GrandDev}}{\leq} &2 C n^{5/2} \times n^{- \delta \eps A/2},
\end{eqnarray*}
for some constant $C>0$ and some $\delta >0$. Therefore, choosing $A > 2/(\delta \eps)  + 5/2$, we obtain the desired result.
\end{proof}


\begin{thebibliography}{10}

\bibitem{AB87}
Michael Aizenman and David~J Barsky.
\newblock Sharpness of the phase transition in percolation models.
\newblock {\em Communications in Mathematical Physics}, 108(3):489--526, 1987.

\bibitem{A91}
David Aldous.
\newblock Asymptotic fringe distributions for general families of random trees.
\newblock {\em The Annals of Applied Probability}, pages 228--266, 1991.

\bibitem{BBJ19}
Riti Bahl, Philip Barnet, and Matthew Junge.
\newblock Parking on supercritical {G}alton-{W}atson trees.
\newblock {\em arXiv preprint arXiv:1912.13062}, 2019.

\bibitem{CG19}
Qizhao Chen and Christina Goldschmidt.
\newblock Parking on a random rooted plane tree.
\newblock {\em arXiv preprint arXiv:1911.03816}, 2019.

\bibitem{CD20}
Xinxing Chen, Victor Dagard, Bernard Derrida, Yueyun Hu, Mikhail Lifshits, and
  Zhan Shi.
\newblock The {D}errida--{R}etaux conjecture on recursive models.
\newblock {\em arXiv preprint arXiv:1907.01601}, 2019.

\bibitem{C23}
Alice Contat.
\newblock Parking on random trees via the configuration model.
\newblock In Preparation.

\bibitem{CC21}
Alice Contat and Nicolas Curien.
\newblock Parking on {C}ayley trees and the {F}rozen {M}ultiplicative
  {C}oalescent.
\newblock In Preparation.

\bibitem{CH19}
Nicolas Curien and Olivier H{\'e}nard.
\newblock The phase transition for parking on {G}alton--{W}atson trees.
\newblock {\em arXiv preprint arXiv:1912.06012}, 2019.

\bibitem{DZ93}
Amir Dembo and Ofer Zeitouni.
\newblock {\em Large deviations techniques and applications}.
\newblock Jones and Bartlett Publishers, Boston, MA, 1993.

\bibitem{DC19}
Hugo Duminil-Copin, Aran Raoufi, and Vincent Tassion.
\newblock Sharp phase transition for the random-cluster and {P}otts models via
  decision trees.
\newblock {\em Annals of Mathematics}, 189(1):75--99, 2019.

\bibitem{FS}
Philippe Flajolet and Robert Sedgewick.
\newblock {\em Analytic combinatorics}.
\newblock Cambridge University Press, 2009.

\bibitem{GP19}
Christina Goldschmidt and Micha{\l} Przykucki.
\newblock Parking on a random tree.
\newblock {\em Combinatorics, Probability and Computing}, 28(1):23--45, 2019.

\bibitem{J12}
Svante Janson.
\newblock Simply generated trees, conditioned {G}alton--{W}atson trees, random
  allocations and condensation.
\newblock {\em Probability Surveys}, 9:103--252, 2012.

\bibitem{J16}
Svante Janson.
\newblock Asymptotic normality of fringe subtrees and additive functionals in
  conditioned {G}alton-{W}atson trees.
\newblock {\em Random Structures \& Algorithms}, 48, 08 2014.

\bibitem{jones19}
Owen~Dafydd Jones.
\newblock Runoff on rooted trees.
\newblock {\em Journal of Applied Probability}, 56(4):1065--1085, 2019.

\bibitem{KW66}
Alan~G Konheim and Benjamin Weiss.
\newblock An occupancy discipline and applications.
\newblock {\em SIAM Journal on Applied Mathematics}, 14(6):1266--1274, 1966.

\bibitem{LP16}
Marie-Louise Lackner and Alois Panholzer.
\newblock Parking functions for mappings.
\newblock {\em Journal of Combinatorial Theory, Series A}, 142:1--28, 2016.

\bibitem{LG}
Jean-Fran{\c{c}}ois Le~Gall.
\newblock Random trees and applications.
\newblock {\em Probability surveys}, 2:245--311, 2005.

\bibitem{P20}
Alois Panholzer.
\newblock Parking function varieties for combinatorial tree models.
\newblock {\em arXiv preprint arXiv:2007.14676}, 2020.

\bibitem{St19}
Benedikt Stufler.
\newblock Local limits of large {G}alton--{W}atson trees rerooted at a random
  vertex.
\newblock In {\em Annales de l'Institut Henri Poincar{\'e}, Probabilit{\'e}s et
  Statistiques}, volume~55, pages 155--183. Institut Henri Poincar{\'e}, 2019.

\end{thebibliography}

\end{document}